\theoremstyle{plain}
  \newtheorem{thm}{Theorem}[section]
  \newtheorem{lem}[thm]{Lemma}
  \newtheorem{prop}[thm]{Proposition}
  \newtheorem{cor}[thm]{Corollary}
\theoremstyle{definition}
  \newtheorem{defn}[thm]{Definition}
  \newtheorem{exmp}[thm]{Example}
  \newtheorem{rem}[thm]{Remark}
\def\ps@pprintTitle{  \let\@oddhead\@empty  \let\@evenhead\@empty  \def\@oddfoot{\centerline{\thepage}} \let\@evenfoot\@oddfoot} \makeatother
\begin{document}

\newcommand{\oto}{{\to\hspace*{-3.1ex}{\circ}\hspace*{1.9ex}}}
\newcommand{\lam}{\lambda}
\newcommand{\da}{\downarrow}
\newcommand{\Da}{\Downarrow\!}
\newcommand{\D}{\Delta}
\newcommand{\ua}{\uparrow}
\newcommand{\ra}{\rightarrow}
\newcommand{\la}{\leftarrow}
\newcommand{\lra}{\longrightarrow}
\newcommand{\lla}{\longleftarrow}
\newcommand{\rat}{\!\rightarrowtail\!}
\newcommand{\up}{\upsilon}
\newcommand{\Up}{\Upsilon}
\newcommand{\ep}{\epsilon}
\newcommand{\ga}{\gamma}
\newcommand{\Ga}{\Gamma}
\newcommand{\Lam}{\Lambda}
\newcommand{\CF}{{\cal F}}
\newcommand{\CA}{{\mathcal{A}}}
\newcommand{\CG}{{\cal G}}
\newcommand{\CH}{{\cal H}}
\newcommand{\CN}{{\mathcal{N}}}
\newcommand{\CB}{{\cal B}}
\newcommand{\CT}{{\cal T}}
\newcommand{\CS}{{\cal S}}
\newcommand{\CP}{{\cal P}}
\newcommand{\CU}{\mathcal{U}}
\newcommand{\CW}{\mathcal{W}}
\newcommand{\CQ}{\mathcal{Q}}
\newcommand{\mq}{\mathcal{Q}}
\newcommand{\cu}{{\underline{\cup}}}
\newcommand{\ca}{{\underline{\cap}}}
\newcommand{\nb}{{\rm int}}
\newcommand{\Si}{\Sigma}
\newcommand{\si}{\sigma}
\newcommand{\Om}{\Omega}
\newcommand{\bm}{\bibitem}
\newcommand{\bv}{\bigvee}
\newcommand{\bw}{\bigwedge}
\newcommand{\dda}{\downdownarrows}
\newcommand{\dia}{\diamondsuit}
\newcommand{\y}{{\bf y}}
\newcommand{\colim}{{\rm colim}}
\newcommand{\fR}{R^{\!\forall}}
\newcommand{\eR}{R_{\!\exists}}
\newcommand{\dR}{R^{\!\da}}
\newcommand{\uR}{R_{\!\ua}}
\newcommand{\swa}{{\swarrow}}
\newcommand{\sea}{{\searrow}}
\newcommand{\bbA}{{\mathbb{A}}}
\newcommand{\frX}{{\mathfrak{X}}}
\newcommand{\frx}{{\mathfrak{x}}}
\newcommand{\frY}{{\mathfrak{Y}}}
\newcommand{\fry}{{\mathfrak{y}}}
\newcommand{\frZ}{{\mathfrak{Z}}}
\newcommand{\frz}{{\mathfrak{z}}}

\newcommand{\id}{{\rm id}}
\newcommand{\bbU}{{\mathbb{U}}}
\newcommand{\bbP}{{\mathbb{P}}}
\newcommand{\bbT}{{\mathbb{T}}}
\newcommand{\bbS}{{\mathbb{S}}}
\newcommand{\CV}{{\mathcal{V}}}
\newcommand{\sU}{{\sf{U}}}
\newcommand{\sV}{{\sf{V}}}
\newcommand{\sW}{{\sf{W}}}
\newcommand{\sP}{{\sf P}}
\newcommand{\sy}{{\sf{y}}}
\newcommand{\sk}{{\sf{k}}}
\newcommand{\sfs}{{\sf{s}}}
\newcommand{\h}{\text{-}}

\numberwithin{equation}{section}
\renewcommand{\theequation}{\thesection.\arabic{equation}}

\begin{frontmatter}
\title{Quantale-valued Approach Spaces via Closure and Convergence}

\author[S]{Hongliang Lai\fnref{A}}
\ead{hllai@scu.edu.cn}

\author[Y]{Walter Tholen\corref{cor}\fnref{A}}
\ead{tholen@mathstat.yorku.ca}

\address[S]{School of Mathematics, Sichuan University, Chengdu 610064, China}
\address[Y]{Department of Mathematics and Statistics, York University, Toronto, Ontario, Canada, M3J 1P3}
%\address{\rm }

\cortext[cor]{Corresponding author.}

\fntext[A]{Partial financial assistance by National Natural Science Foundation of China (11101297), International Visiting Program for Excellent Young Scholars of Sichuan University, and by the Natural Sciences and Engineering Research Council (NSERC) of Canada is gratefully acknowledged. This work was completed while the first author held a Visiting Professorship at York University.}

\begin{abstract}

For a quantale $\sV$ we introduce $\sV$-approach spaces via $\sV$-valued point-set-distance functions and, when $\sV$ is completely distributive, characterize them in terms of both, so-called closure towers and ultrafilter convergence relations. When $\sV$ is the two-element chain ${\sf 2}$, the extended real half-line $[0,\infty]$, or the quantale ${\bf{\Delta}}$ of distance distribution functions, the general setting produces known and new results on topological spaces, approach spaces, and the only recently considered probabilistic approach spaces, as well as on their functorial interactions with each other.
\end{abstract}

\begin{keyword}
%% keywords here, in the form: keyword \sep keyword
quantale\sep $\sV$-closure space \sep $\sV$-approach space \sep discrete $\sV$-presheaf monad \sep lax distributive law \sep lax $(\lam,\sV)$-algebra \sep probabilistic approach space, algebraic functor, change-of-base functor.
\MSC[2010] 54A20, 54B30, 54E70, 18D20, 18C99.
\end{keyword}

\end{frontmatter}

\section{Introduction}

Lowen's \cite{Lowen} approach spaces provide an ideal synthesis of Lawvere's \cite{Lawvere} presentation of metric spaces (as small $[0,\infty]$-enriched categories) and the Manes-Barr \cite{Manes, Barr} presentation of topological spaces in terms of ultrafilter convergence, as demonstrated first in \cite{CleHof2003}; see also \cite{MonTop}. Several authors have investigated {\em probabilistic} generalizations of these concepts (see in particular \cite{BrockKent, HofmannReis, Jager}), which suggests that a general quantale-based study of approach spaces should be developed, in order to treat these and other new concepts efficiently in a unified manner, in terms of both, ``distance" or ``closure",  and ``convergence". In this paper we provide such a treatment, working with an arbitrary quantale $\sV=(\sV,\otimes,\sk)$
which, for the main results of the paper, is required to be completely distributive. For $\sV={\sf 2}$ the two-element chain, our results reproduce the equivalence of the descriptions of topologies in terms of closure and ultrafilter convergence; for $\sV=[0,\infty]$ (ordered by the natural $\geq$ and structured by $+$ as the quantalic $\otimes$), one obtains the known equivalent descriptions of approach spaces in terms of point-set distances and of ultrafilter convergence; for
 $\sV={\bf{\Delta}}$ the quantale of {\em distance distribution functions} $\varphi: [0,\infty]\to[0,1]$, required to satisfy the left-continuity condition $\varphi(\beta)={\rm sup}_{\alpha<\beta}\varphi(\alpha)$ for all $\beta\in [0,\infty]$, the corresponding equivalence is established here also for {\em probabilistic approach spaces}.  A major advantage of working in the harmonized context of a general quantale is that it actually makes the proofs more transparent to us than if they were carried out in the concrete quantales that we are interested in.

While this paper is built on the methods of {\em monoidal topology} as developed in \cite{ClementinoTholen2003, CleHofTho, MonTop} and elsewhere (see in particular \cite{Hohle}), in this paper we emphasize the lax-algebraic setting presented in \cite{LaxDistLaws}, which is summarized in this paper to the extent needed. This setting is in fact well motivated by Lowen's original axioms for an approach space $(X,\delta)$ in terms of its point-set distance function $\delta: X\times \sP X\to [0,\infty]$, listed in \cite{Lowen} with $\sP X={\sf 2}^X$, as follows:
\begin{enumerate}[(D1)]
\item $\forall x\in X: \;\delta(x,\{x\})=0$,
\item $\forall x \in X: \;\delta(x,\emptyset)=\infty$,
\item $\forall x \in X,\; A, B\subseteq X:\;\delta(x,A\cup B)={\rm min}\{\delta(x,A),\delta(x,B)\}$,
\item $\forall x\in X, \; A \subseteq X,\; \varepsilon \in [0,\infty]:\;\delta(x,A)\leq \delta(x,A^{(\varepsilon)})+ \varepsilon,
\text{ where } A^{(\varepsilon)}:=\{x\in X\;|\;\delta(x,A)\leq\varepsilon\}$.
\end{enumerate}
Since (D2), (D3) require $\delta(x,-): (\sP X, \subseteq)\to([0,\infty],\geq)$ to preserve finite joins for every fixed $x\in X$, we are led to describe $\delta$ equivalently as a function
$$c:\sP X\to[0,\infty]^X\quad\quad\quad(*)$$
-- which also avoids the quantification over $x$ in each of these axioms. Now (D1) and (D4) may be interpreted as the reflexivity and transitivity axioms for a lax $(\bbP,[0,\infty])$-algebra in the sense of \cite{MonTop}, where $\bbP$ is the powerset monad of {\bf Set}, suitably extended to $[0,\infty]$-valued relations of sets. Equivalently, as we will show in this paper, (D1) and (D4) provide $X$ with a $[0,\infty]$-indexed {\em closure tower} (named so after the  terminology used in \cite{BrockKent, Zhang}), the members of which are collectively extensive, monotone and idempotent, in a sense that we make precise in the general context of a quantale in Proposition \ref{closuretowers}. In this way we obtain new characterizations of approach spaces and of probabilistic approach spaces in terms of closure, which we summarize at the end of Section 2.

Lowen \cite{Lowen} also gave the equivalent description of the structure of an approach space $X$  in terms of a {\em limit operator} ${\sf F}X\to [0,\infty]^X$, which assigns to every filter on $X$ a function that provides for every $x\in X$ a measure of ``how far away $x$ is from being a limit point" of the given filter. As first shown in \cite{CleHof2003}, it suffices to restrict this operator to ultrafilters, so that the structure may in fact be given by a map
$$\ell:{\sf U}X\to [0,\infty]^X\quad\quad\quad(**)$$
satisfying two axioms that correspond to the reflexivity and transitivity conditions for a lax $({\mathbb U},[0,\infty])$-algebra structure on $X$ as described in \cite{MonTop}, with $\mathbb U$ denoting the ultrafilter monad of {\bf Set}, understood to be laxly extended from maps to $[0,\infty]$-valued relations. 

The presentations $(*), (**)$ motivated the study of {\em lax} $(\lam,\sV)${\em -algebras} in \cite{LaxDistLaws}, {\em i.e.}, of sets provided with a map
$$c:TX\to \sV^X$$
satisfying two basic axioms. Here, for a {\bf Set}-monad $\bbT=(T,m,e)$ and the given quantale
$\sV$, $\lam$ is a {\em lax distributive law} of $\bbT$ over $\bbP_{\sV}$, which links $\bbT$ with $\sV$,
as encoded by the $\sV$-powerset monad $\bbP_{\sV}=(\sP_{\sV},{\sf s},\sy)$, with $\sP_{\sV}X=\sV^X$. For $\bbT=\bbP=\bbP_{\sf 2}$ and a naturally chosen lax distributive law, the corresponding lax algebras are $\sV${\em -closure
spaces}, satisfying the $\sV$-versions of (D1), (D4); they are $\sV${\em -approach spaces} when they also satisfy the $\sV$-versions of (D2), (D3). The main result of the paper (Theorem \ref{mainthm}) describes them equivalently as the lax algebras with respect to a naturally chosen lax distributive law of the ultrafilter monad $\bbU$ over $\bbP_{\sV}$, provided that $\sV$ is completely distributive. The relevant isomorphism of categories comes about as the restriction of an adjunction, the left-adjoint functor of which is an {\em algebraic functor} as discussed in \cite{LaxDistLaws} (in generalization of the well-known concept presented in \cite{CleHofTho, MonTop}).
For $\sV={\bf{\Delta}}$ our general result produces a new characterization of 
probabilistic approach spaces in terms of ultrafilter convergence (Corollary \ref{ProbAppUltra}).

In the last section we study so-called {\em change-of-base functors} (see \cite{CleHofTho, MonTop, LaxDistLaws}) for the categories at issue in this paper. An application of our general result (Theorem \ref{equivalences}) gives a unified proof for the known facts that {\bf Top} may be fully emdedded into {\bf App} as a simultaneously reflective and coreflective subcategory which, in turn is reflectively and coreflectively embedded into {\bf ProbApp}.

\section{$\sV$-approach spaces via closure}
Throughout the paper, let $\sV=(\sV,\otimes,{\sf k})$ be a (unital but not necessarily commutative) {\em quantale, i.e.,} a complete lattice with a monoid structure whose binary operation $\otimes$ preserves suprema in each variable. There are no additional provisions for the  tensor-neutral element $\sk$
vis-\`{a}-vis the bottom and top elements in $\sV$, {\em i.e.,} we  exclude neither the case $\sk=\bot$ (so that $|\sV|=1$), nor $\sk < \top$. The $\sV${\em -powerset functor} $\sP_{\sV}: {\bf Set}\to{\bf Set}$ is given by
$$(f:X\to Y)\mapsto (f_!:\sV^X\to\sV^Y),\;f_!(\sigma)(y)=\bigvee_{x\in f^{-1}y}\sigma(x),$$
for all $\sigma:X\to\sV,\;y\in Y$. The functor $\sP_{\sV}$ carries a monad structure, given by
\[\sy_X:X\to\sV^X,\quad(\sy_Xx)(y)=\left\{
\begin{array}{ll}
\sk & \text{if }y=x\\
\bot & \text{otherwise}
\end{array}
\right\},\]
\[\sfs_X:\sV^{\sV^X}\to\sV^X,\quad(\sfs_X\Sigma)(x)=\bigvee_{\sigma\in\sV^X}\Sigma(\sigma)\otimes\sigma(x),\]
for all $x,y\in X$ and $\Sigma:\sV^X\to\sV$.

Let $\bbT=(T,m,e)$ be any monad on {\bf Set}. A {\em lax distributive law} $\lam$ of $\bbT$ over $\bbP_{\sV}=(\sP_{\sV},\sfs,\sy)$ (see \cite{MonTop, LaxDistLaws}, and \cite{Beck} for its original name giver) is a family of maps $\lam_X:T(\sV^X)\to \sV^{TX}\;(X\in{\bf Set})$ which, when one orders maps to a power of $\sV$ pointwise by the order of $\sV$, must satisfy the following conditions:

 \begin{tabular}{llr}
 &\\
 ${\text{(a)}}\quad\forall f:X\to Y:$& $(Tf)_!\cdot \lam_X\leq\lam_Y\cdot T(f_!)$ &(lax naturality of $\lambda$),\\

 ${\text{(b)}}\quad\forall X:$& $\sy_{TX}\leq \lam_X\cdot T\sy_X$  &
(lax ${\mathbb P}_{\sV}$-unit law),\\
${\text{(c)}}\quad\forall X:$ &  $\sfs_{TX}\cdot (\lam_X)_!\cdot \lam_{\sV^X}\leq\lam_X\cdot T\sfs_X$  &(lax ${\mathbb P}_{\sV}$-multiplication law),\\
${\text{(d)}}\quad\forall X:$
& $(e_X)_!\leq\lam_X\cdot e_{\sV^X}$ &
(lax ${\mathbb T}$-unit law), \\
${\text{(e)}}\quad\forall X:$ & $ (m_X)_!\cdot\lam_{TX}\cdot T\lam_X\leq\lam_X\cdot m_{\sV^X}$  &(lax ${\mathbb T}$-multiplication law),\\
${\text{(f)}}\quad\forall g,h:Z\to\sV^X:$ & $g\leq h\Longrightarrow \lam_X\cdot Tg\leq\lam_X\cdot Th$ &(monotonicity).\\
&\\
\end{tabular}
  
\begin{rem}\label{lax ext}
Although we will make use of it only in the next sextion, let us mention here the fact that lax distributive laws of a {\bf Set}-monad $\bbT =(T,m,e)$ over $\bbP_{\sV}$ correspond bijectively to lax extensions $\hat{T}$ of $\bbT$ to the category
$\sV\text{-}{\bf Rel}$ of sets with $\sV$-valued relations $r:X\nrightarrow Y$ as morphisms, which are equivalently displayed as maps $\overleftarrow{r}:Y\to \sP_{\sV}X$ (see \cite{LaxDistLaws} and Exercise III.1.I in  \cite{MonTop}).
Given $\lam$, the lax functor $\hat{T}:{\sV}\text{-}{\bf Rel}\to{\sV}\text{-}{\bf Rel}$ assigns to $r$ the $\sV$-relation 
$\hat{T}r:TX\nrightarrow TY$ defined by
$$\overleftarrow{\hat{T}r}=\lam_X\cdot T\overleftarrow{r}.$$
Conversely, the lax distributive law $\lam$ associated with $\hat{T}$ is given by
$$\lam_X=\overleftarrow{\hat{T}\epsilon_X},$$
with $\epsilon_X:X\nrightarrow \sP_{\sV}X$ the evaluation $\sV$-relation: $\epsilon_X(x,\sigma)=\sigma(x)$.
\end{rem}

\begin{prop}\label{powerdistributes}
The ordinary powerset monad $\bbP=\bbP_{\sf 2}$ distributes laxly over the $\sV$-powerset monad $\bbP_{\sV}$, via
$$\alpha_X:\sP(\sV^X)\to\sV^{\sP X},\quad(\alpha_XS)(A)=\bigwedge_{x\in A}\bigvee_{\sigma\in S}\sigma(x)\quad(S\subseteq\sV^X,\; A\subseteq X).$$
\end{prop}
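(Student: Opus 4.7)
The plan is to check each of the six axioms (a)--(f) directly from the definition of $\alpha_X$, unpacking both sides pointwise and exploiting only the quantale axiom that $\otimes$ preserves suprema in each variable (hence is monotone). For (f) monotonicity and (a) lax naturality the verification is immediate: if $g\leq h$ pointwise, then term-by-term $\bigwedge_{x\in A}\bigvee_{z\in T}g(z)(x)\leq\bigwedge_{x\in A}\bigvee_{z\in T}h(z)(x)$; and for $f:X\to Y$, any $A\subseteq X$ with $f[A]=B$ admits for every $y\in B$ some $x\in A\cap f^{-1}y$, so one can compare $((\sP f)_!\alpha_XS)(B)=\bigvee_{A:f[A]=B}\bigwedge_{x\in A}\bigvee_{\sigma\in S}\sigma(x)$ with $\alpha_Y(\sP f_!S)(B)=\bigwedge_{y\in B}\bigvee_{\sigma\in S}\bigvee_{x\in f^{-1}y}\sigma(x)$ and read off the inequality.

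The unit laws (b) and (d) turn on the observation that $\sy_X(x)(x')=\sk$ iff $x=x'$. For (b) one computes that $\alpha_X(\sP\sy_X(A))(A')=\sk$ whenever $A'\subseteq A$ and is in any case $\geq\bot$, which dominates the Dirac-type $\sy_{\sP X}(A)(A')$; for (d), the map $(e_X)_!(\sigma)$ is supported on singletons of $X$ and there agrees with $\alpha_X(\{\sigma\})$, while on all other $A\in\sP X$ the latter gives a meet that can only be larger (being $\top$ at $A=\emptyset$).

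The substance of the proof lies in (c) and (e), where nested meets, joins and tensors must be juggled. For (c), after unfolding the definitions of $\sfs_{\sP X}$, $(\alpha_X)_!$, and $\alpha_{\sV^X}$, one is reduced, for $\Sigma\subseteq\sV^{\sV^X}$ and $A\subseteq X$, to the inequality
\[\bigvee_{S\subseteq\sV^X}\Bigl(\bigwedge_{\sigma\in S}\bigvee_{\Phi\in\Sigma}\Phi(\sigma)\Bigr)\otimes\Bigl(\bigwedge_{x\in A}\bigvee_{\sigma\in S}\sigma(x)\Bigr)\leq\bigwedge_{x\in A}\bigvee_{\Phi\in\Sigma,\,\sigma\in\sV^X}\Phi(\sigma)\otimes\sigma(x).\]
The decisive move is, for each fixed $S$ and $x\in A$, to distribute $\otimes$ over $\bigvee_{\sigma\in S}\sigma(x)$ and then use $\bigwedge_{\sigma'\in S}\bigvee_\Phi\Phi(\sigma')\leq\bigvee_\Phi\Phi(\sigma)$ for each $\sigma\in S$, which bounds the left-hand summand by $\bigvee_{\sigma\in S,\,\Phi}\Phi(\sigma)\otimes\sigma(x)$ and hence by the right-hand side. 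For (e), starting from $\mathfrak{S}\subseteq\sP(\sV^X)$ and $\CA\in\sP\sP X$ with $\bigcup\CA=A$, the analogous trick is: for any $x\in A$ pick $A'\in\CA$ with $x\in A'$, so that $\bigwedge_{x'\in A'}\bigvee_{\sigma\in S}\sigma(x')\leq\bigvee_{\sigma\in S}\sigma(x)$; meeting over $x\in A$ and then taking the join over covers $\CA$ delivers the required bound against $\bigwedge_{x\in A}\bigvee_{S\in\mathfrak{S},\,\sigma\in S}\sigma(x)=\alpha_X(m_{\sV^X}\mathfrak{S})(A)$.

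The main obstacle is therefore notational rather than conceptual: no additional lemma beyond the quantale axioms is needed, only careful bookkeeping of the three layers of nested suprema, infima, and tensor products imposed by the definitions of $\sfs_X$, $\alpha_X$, and the $\sV$-powerset action on maps.
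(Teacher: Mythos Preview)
Your proposal is correct and follows essentially the same route as the paper's proof: a direct verification of each of the six conditions (a)--(f), with the same key moves in the two substantive cases --- distributing $\otimes$ over the inner join and then instantiating the meet at the running index $\sigma\in S$ for (c), and choosing a member of the cover containing a given $x$ for (e). The paper's write-up is simply the fully expanded version of the computations you sketch; no additional ideas appear there.
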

\begin{proof}
(a) For all $S\subseteq \sV^X, B\subseteq X$ one has
$$((\sP f)_!\cdot\alpha_X(S))(B)
= \bigvee_{A\subseteq X, f(A)=B}(\alpha_XS)(A)=\bigvee_{A\subseteq X, f(A)=B}\bigwedge_{x\in A}\bigvee_{\sigma \in S}\sigma(x).$$
Lax naturality of $\alpha$ follows since, for every $A\subseteq X$ with $f(A)=B$,
$$
\bw_{x\in A}\bv_{\sigma\in S}\sigma(x)\leq\bw_{y \in B}\bv_{\sigma \in S}\bv_{x\in f^{-1}y}\sigma(x)
=\bw_{y\in B}\bv_{\sigma\in S}(f_!\sigma)(y)
=\alpha_Y(f_!(S))(B)=(\alpha_Y\cdot\sP(f_!))(S)(B).
$$

(b) For all $A,B\subseteq X$,
\[(\alpha_X\cdot \sP\sy_X(B))(A)=\bw_{x\in A}\bv_{y\in B}(\sy_Xy)(x)=
\left\{
\begin{array}{ll}
\top & \text{if }A=\emptyset\\
\sk & \text{if }\emptyset\neq A\subseteq B\\
\bot & \text{otherwise}
\end{array}
\right\}\geq (\sy_{\sP X}B)(A).\]

(c) For all $\bbS\subseteq\sV^{\sV^X},A\subseteq X$,
\begin{align*}
(\sfs_{\sP X}\cdot(\alpha_X)_!\cdot\alpha_{\sV^X}(\bbS))(A) & =\bv_{\tau\in\sV^{\sP X}}((\alpha_X)_!\cdot\alpha_{\sV^X}(\bbS))(\tau)\otimes\tau(A)\\
&=\bv_{\tau\in\sV^{\sP X}}\bv_{S\subseteq\sV^X,\alpha_X(S)=\tau}\alpha_{\sV^X}(\bbS)(S)\otimes\tau(A)\\
&\leq\bv_{S\subseteq\sV^X}\alpha_{\sV^X}(\bbS)(S)\otimes\alpha_X(S)(A)\\
&=\bv_{S\subseteq\sV^X}\Big(\bw_{\tau\in S}\bv_{\Sigma\in\bbS}\Sigma(\tau)\Big)\otimes\Big(\bw_{x\in A}\bv_{\sigma\in S}\sigma(x)\Big)\\
&\leq\bv_{S\subseteq\sV^X}\bw_{x\in A}\bv_{\sigma\in S}\Big(\bw_{\tau\in S}\bv_{\Sigma\in\bbS}\Sigma(\tau)\Big)\otimes\sigma(x)\\
&\leq\bv_{S\in\sV^X}\bw_{x\in A}\bv_{\sigma\in S}\bv_{\Sigma\in\bbS}\Sigma(\sigma)\otimes\sigma(x)\\
&\leq\bw_{x\in A}\bv_{\Sigma\in\bbS}\bv_{\sigma\in\sV^X}\Sigma(\sigma)\otimes\sigma(x)\\
&=(\alpha_X\cdot\sfs_X(\bbS))(A).\\
\end{align*}

(d) With $e_X:X\to \sP X$ denoting the map $x\mapsto \{x\}$, for all $\sigma\in\sV^X, A\subseteq X$ one has
\[(e_X)_!(\sigma)(A)=\bv_{x\in X,\{x\}=A}\sigma(x)=\left\{
\begin{array}{ll}
\sigma(x)& \text{if }(\exists x:A=\{x\})\\
\bot & \text{otherwise}
\end{array}
\right\}
\leq\bw_{x\in A}\sigma(x)=(\alpha_X\cdot e_{\sV^X}(\sigma))(A).\]

(e) With $m_X:\sP\sP X\to \sP X$ denoting the map $\CA\mapsto \bigcup\CA$, for all $\CS\subseteq\sV^X, A\subseteq X$ one has
\begin{align*}
((m_X)_! \cdot\alpha_{\sP X}\cdot\sP\alpha_X(\CS))(A)&=\bv_{\CA\subseteq\sP X,\bigcup\CA=A}(\alpha_{\sP X}\cdot\sP\alpha_X(\CS))(\CA)\\
&=\bv_{\CA\subseteq\sP X,\bigcup\CA=A}\bw_{B\in\CA}\bv_{S\in\CS}(\alpha_XS)(B)\\
&=\bv_{\CA\subseteq\sP X,\bigcup\CA=A}\bw_{B\in\CA}\bv_{S\in\CS}\bw_{y\in B}\bv_{\sigma\in S}\sigma(y),\text{   and}\\
(\alpha_X\cdot m_{\sV^X}(\CS))(A)&=\bw_{x\in A}\bv_{S\in \CS}\bv_{\sigma\in S}\sigma(x).
\end{align*}
But whenever $x\in A=\bigcup\CA$, so that $x\in B_0$ for some $B_0\in\CA$, we have
$$\bw_{B\in\CA}\bv_{S\in\CS}\bw_{y\in B}\bv_{\sigma\in S}\sigma(y)\leq
\bv_{S\in\CS}\bw_{y\in B_0}\bv_{\sigma\in S}\sigma(y)\leq\bv_{S\in \CS}\bv_{\sigma\in S}\sigma(x)$$
and may conclude $((m_X)_! \cdot\alpha_{\sP X}\cdot\sP\alpha_X(\CS))(A)\leq
(\alpha_X\cdot m_{\sV^X}(\CS))(A)$.

(f) From $g\leq h$, for all $C\subseteq Z, A\subseteq X$
one obtains immediately
$$\alpha_X(g(C)))(A)=\bw_{x\in A}\bv_{z\in C}(gz)(x)\leq\bw_{x\in A}\bv_{z\in C}(hz)(x)=\alpha_X(h(C)))(A).$$
\end{proof}

\begin{defn}\label{defnlaxalgebra} (1) (\cite{LaxDistLaws})
Let $\lam$ be a lax distributive law of a {\bf Set}-monad $\bbT$ over $\bbP_{\sV}$. A {\em lax} $(\lam,\sV)${\em -algebra} $(X,c)$ is a set $X$ with a map $c:TX\to\sV^X$ satisfying

 \begin{tabular}{lr}
 &\\
 ${\text{(R)}} \quad\sy_X\leq c\cdot e_X$ &
(lax unit law, reflexivity), \\
${\text{(T)}}\quad  \sfs_X\cdot c_!\cdot\lam_X\cdot Tc\leq c\cdot m_X$ & \quad\quad\quad\quad\quad\quad\quad\quad(lax multiplication law, transitivity).\\
&\\
\end{tabular}

A {\em lax homomorphism} $f:(X,c)\to(Y,d)$ of lax $(\lam,\sV)$-algebras is a map $f:X\to Y$ satisfying

\begin{tabular}{lr}
 &\\
 ${\text{(M)}} \quad f_!\cdot c\leq d\cdot Tf$ &\quad\quad\quad\quad\quad\quad\quad\quad\quad\quad
(lax homomorphism law, monotonicity). \\
&\\
\end{tabular}

The resulting category is denoted by $$(\lam,\sV)\text{-}{\bf Alg}.$$
(2) A $\sV${\em -closure space} $(X,c)$ is a lax $(\alpha,\sV)$-algebra, with $\alpha$ as in Proposition \ref{powerdistributes}; it is a $\sV${\em -approach space} if, in addition, $c:\sP X\to \sV^X$ preserves finite joins:
$$\forall x\in X,\,A,B\subseteq X: (c\emptyset)(x)=\bot\text{ and }c(A\cup B)(x)=(cA)(x)\vee (cB)(x).$$
 A lax $\alpha$-homomorphism of $\sV$-closure spaces is also called a {\em contractive} map. We obtain
the category \[\sV\text{-}{\bf Cls}=(\alpha,\sV)\text{-}{\bf Alg}\] and its full subcategory $\sV\text{-}{\bf App}$.
\end{defn}

\begin{rem}\label{TVCat}
If the lax distributive law $\lam$ is equivalently described as a lax extension $\hat{T}$ of $\bbT$ (see Remark \ref{lax ext}), then
$$(\lam,\sV)\text{-}{\bf Alg}\cong(\bbT,\sV,\hat{T})\text{-}{\bf Cat}$$
is the category of $(\bbT,\sV)$-{\em categories}, as defined in \cite{MonTop}. Under this isomorphism (see \cite{LaxDistLaws}, Prop. 6.8), the $(\lam,\sV)$-structure $c:TX\to\sP_{\sV}X$ corresponds to the $\sV$-relation $a:TX\nrightarrow X$ with $\overleftarrow{a^{\circ}}=c$ 
(where $a^{\circ}: X\nrightarrow TX$ is the converse of $a$), and (R) and (T) now read as
$$\sk\leq a(e_X(x),x)\quad{\rm{and}\quad} \hat{T}a(\frX,\fry)\otimes a(\fry,z)\leq a(m_X\frX,z),$$
for all $\frX\in TTX,\fry\in TX, z\in X$.
\end{rem}

The lax extension $\hat{\sP}:\sV\text{-}{\bf Rel}\to\sV\text{-}{\bf Rel}$ corresponding to $\alpha$ of Proposition \ref{powerdistributes} is, after an easy computation, described by
\[\hat{\sP}r(A,B)=\bw_{y\in B}\bv_{x\in A}r(x,y),\]
for all $\sV$-relations $r:X\nrightarrow Y, \, A,B\subseteq X.$ Consequently, stated elementwise, conditions (R), (T), (M) read for $\lam=\alpha$, as follows:

\begin{lem}\label{closureptw}
A map $c:\sP X\to\sV^X$ makes $X$ a $\sV$-closure space if and, only if, $c$ satisfies

\begin{tabular}{ll}
$\mathrm{(R')}\quad \forall x\in X:$& $\sk\leq c(\{x\})(x),$\\
$\mathrm{(T')}\quad \forall \CA\subseteq \sP X, B\subseteq X, z\in X:$ & $ \Big(\bw_{y\in B}\bv_{A\in \CA}(cA)(y)\Big)\otimes (cB)(z)\leq c(\bigcup\CA)(z).$\\
\end{tabular}

A map $f:X\to Y$ of $\sV$-closure spaces $(X,c), (Y,d)$ is contractive if, and only if,

\begin{tabular}{ll}
$\mathrm{(M')}\quad\forall x\in X, A\subseteq X:$&\quad\quad\quad\; $(cA)(x)\leq(df(A))(fx).$\\
&\\
\end{tabular}
\end{lem}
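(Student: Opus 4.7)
The plan is to deduce the elementwise forms by combining the correspondence (Remark~\ref{TVCat}) between $(\lam,\sV)$-algebras and $(\bbT,\sV)$-categories with the explicit description of the lax extension $\hat{\sP}$ recorded in the paragraph preceding the lemma.

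Under that correspondence, applied to $\bbT=\bbP$ and $\lam=\alpha$, the structure $c\colon\sP X\to\sV^X$ is the same datum as the $\sV$-relation $a\colon\sP X\nrightarrow X$ with $a(A,x)=(cA)(x)$. The reflexivity condition (R) of Definition~\ref{defnlaxalgebra} then becomes $\sk\leq a(e_X(x),x)=a(\{x\},x)$, which is (R'), and the transitivity condition (T) becomes
\[\hat{\sP}a(\CA,B)\otimes a(B,z)\leq a(m_X\CA,z)\]
for all $\CA\subseteq\sP X$, $B\subseteq X$, $z\in X$. Substituting $\hat{\sP}a(\CA,B)=\bw_{y\in B}\bv_{A\in\CA}a(A,y)$ and $m_X\CA=\bigcup\CA$ yields (T') on the nose.

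For the contractivity condition (M), the inequality $f_!\cdot c\leq d\cdot\sP f$, evaluated pointwise at $A\subseteq X$ and $y\in Y$, reads
\[\bigvee_{x\in f^{-1}y}(cA)(x)\leq(df(A))(y),\]
which by the universal property of the supremum is equivalent to $(cA)(x)\leq(df(A))(fx)$ for every $x\in X$, i.e., to (M'). Conversely, (M') implies this pointwise supremum inequality, so the two forms agree.

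No individual step is genuinely difficult; the only bookkeeping to keep straight is the identification in Remark~\ref{TVCat}, by which the four-fold composite $\sfs_X\cdot c_!\cdot\alpha_X\cdot\sP c$ living in $\sV^{\sV^X}$ gets rewritten as the simple binary tensor $\hat{\sP}a\otimes a$ with values in $\sV$. Once that rewriting is in place, (R') and (T') drop out by direct substitution of the formulas for $e_X$, $m_X$, and $\hat{\sP}$, while (M) is already a pointwise inequality up to one application of the defining property of the supremum.
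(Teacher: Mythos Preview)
Your proof is correct and follows exactly the approach the paper intends. The paper does not spell out a proof but writes ``Consequently, stated elementwise, conditions (R), (T), (M) read for $\lam=\alpha$, as follows:'' right after displaying the formula for $\hat{\sP}$ and referring back to Remark~\ref{TVCat}; your argument simply makes that ``consequently'' explicit, unwinding the correspondence $c(A)(x)=a(A,x)$ together with $e_X(x)=\{x\}$, $m_X\CA=\bigcup\CA$, and $\hat{\sP}a(\CA,B)=\bw_{y\in B}\bv_{A\in\CA}a(A,y)$. One minor wording point: the composite $\sfs_X\cdot c_!\cdot\alpha_X\cdot\sP c$ is a map $\sP\sP X\to\sV^X$, not something ``living in $\sV^{\sV^X}$''; presumably you mean the intermediate value $c_!\cdot\alpha_X\cdot\sP c$ before applying $\sfs_X$, but this is only a phrasing issue and does not affect the argument.
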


We can now describe the structure of $\sV$-closure spaces in terms of $\sV$-indexed {\em closure towers}, as follows.

\begin{prop}\label{closuretowers}
{\rm (1)} For a $\sV$-closure space $(X,c)$, with $$c^vA:=\{x\in X\;|\;(cA)(x)\geq v\}\quad(v\in\sV, \;A\subseteq X)$$ one obtains a family of maps
$(c^v:\sP X \to \sP X)_{v\in\sV}$ satisfying
\begin{enumerate}
\item[{\rm (C0)}]
$\text{if  }B\subseteq A, \text{ then  }c^vB\subseteq c^vA,$
\item[{\rm (C1)}] $\text{if  }v\leq\bv_{i\in I}u_i, \text{ then   }\bigcap_{i\in I}c^{u_i}A\subseteq c^vA,
$\item[{\rm (C2)}] $A\subseteq c^{\sk}A,$
\item [{\rm (C3)}] $c^uc^vA\subseteq c^{v\otimes u}A,$
\end{enumerate}
for all $A\subseteq X$ and $ u,v,u_i \in V\; (i\in I)$.

{\rm (2)} Conversely, for any family maps $(c^v:\sP X\to\sP X)_{v\in \sV}$ satisfying the conditions {\rm (C0)--(C3)}, putting $$(cA)(x):=\bv\{v\in\sV\;|\;x\in c^vA\}\quad (A\subseteq X,\;x\in X)$$ makes $(X,c)$ a $\sV$-closure space.

{\rm (3)} The correspondences of {\rm (1), (2)} are inverse to each other. Under this bijection, contractivity of a map $f:X\to Y$ is equivalently described by the {\rm continuity} condition
$$\forall A\subseteq X,\;v\in\sV:\;f(c^vA)\subseteq d^v(f(A)).$$
\end{prop}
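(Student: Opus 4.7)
The plan is to treat the three parts of the proposition in sequence, using Lemma \ref{closureptw} to replace the original $\sV$-closure axioms by their pointwise forms (R') and (T'). A small preparatory observation, used several times below, is that once one has (C1) in hand, the set $\{v \in \sV : x \in c^v A\}$ is down-closed, and hence the supremum $(cA)(x) := \bv\{v : x \in c^v A\}$ is itself attained, i.e., $x \in c^{(cA)(x)} A$. This \emph{attainment lemma} will be the universal lubricant throughout parts (2) and (3).

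For (1), starting from a $\sV$-closure space $(X,c)$, I verify (C0)--(C3) for the level sets $c^v A$. Conditions (C1) and (C3) are almost direct: (C1) is just the definition of supremum in $\sV$, while (C3) is an instance of (T') with $\CA = \{A\}$ and $B = c^v A$, since $\bw_{y\in c^v A}(cA)(y) \geq v$ by the very definition of $c^v A$, and the second factor $(c(c^v A))(x)$ supplies $u$. The least obvious point is that $c$ is monotone in its first argument, which is needed for both (C0) and (C2). The trick for monotonicity will be to apply (T') with $\CA = \{A, B\}$ (for $B\subseteq A$, so that $\bigcup\CA = A$) and with the singleton $\{z\}$ in the role of $B$ in the formula; this yields $((cA)(z)\vee(cB)(z))\otimes (c\{z\})(z)\leq c(A\cup B)(z) = (cA)(z)$, and (R') combined with the monotonicity of $\otimes$ then forces $(cB)(z)\leq (cA)(z)$. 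Condition (C2) then follows from (R') together with this monotonicity.

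For (2), starting from $(c^v)_{v\in\sV}$ satisfying (C0)--(C3), the axiom (R') is read off directly from (C2) via $\sk\in\{v : x\in c^v\{x\}\}$. The verification of (T') is the main technical step. Writing $v := \bw_{y\in B}\bv_{A\in\CA}(cA)(y)$ and $u := (cB)(z)$, I use (C0) applied to $A\subseteq\bigcup\CA$ to conclude $(cA)(y)\leq c(\bigcup\CA)(y)$, hence via the attainment observation $B \subseteq c^v(\bigcup\CA)$; similarly $z\in c^u B$. Then (C0) and (C3) chain to give $z \in c^u B \subseteq c^u c^v(\bigcup\CA) \subseteq c^{v\otimes u}(\bigcup\CA)$, which is exactly $v\otimes u \leq c(\bigcup\CA)(z)$, as required.

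For (3), the round trips are essentially formal: starting from $c$ and returning via the level sets, $\bv\{v : (cA)(x)\geq v\} = (cA)(x)$ by elementary order theory; starting from $(c^v)$ and returning, the inclusion $c^v A \subseteq \{x : (cA)(x)\geq v\}$ is by inspection, and the reverse inclusion uses the attainment observation once more. For the contractivity equivalence, $(M')\Rightarrow$ continuity is immediate, since $x\in c^v A$ gives $(df(A))(fx)\geq(cA)(x)\geq v$; conversely, continuity $\Rightarrow (M')$ is obtained by evaluating continuity at $v = (cA)(x)$ and using attainment to get $fx\in f(c^v A)\subseteq d^v f(A)$, whence $(df(A))(fx)\geq (cA)(x)$. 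The main obstacle is the two non-obvious choices of test data in part (1) (to extract monotonicity of $c$ from (T')) and in part (2) (to organize the chain of inclusions that combines (C0) with (C3)); once these are in place, everything else reduces to exploiting the attainment consequence of (C1).
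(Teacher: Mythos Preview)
Your proposal is correct and follows essentially the same approach as the paper. The one notable tactical difference is in part (1): you establish monotonicity of $c$ (hence (C0)) first, by instantiating (T') with $\CA=\{A,B\}$ and the singleton $\{z\}$ in the role of $B$, and then derive (C2) from monotonicity plus (R'); the paper instead proves (C2) directly (taking $\CA=\{\{x\}:x\in A\}$, $B=\{x_0\}$) and then uses (C2) to obtain (C0) via $\CA=\{A\}$ and the original $B$. Both orderings work; yours has the small advantage of making the monotonicity of $c$ explicit as a standalone step, while the paper's avoids needing the join $(cA)(z)\vee(cB)(z)$. Part~(2) and part~(3) match the paper's argument almost verbatim, with your ``attainment lemma'' (a consequence of (C1)) playing exactly the role the paper assigns to (C1) in the round-trip and continuity verifications.
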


\begin{proof}
(1) For (C2), consider $x_0\in A$ and put $\CA:=\{\{x\}\;|\;x\in A\}, B:=\{x_0\}$, to obtain $\sk=\sk\otimes\sk\leq (cA)(x_0)$ from (R'), (T'), {\em i.e.,} $x_0\in c^{\sk}A$. To see (C0), one puts $\CA:=\{A\}$ and obtains for $x\in c^vB$ from $B\subseteq A$ and  (C2), (T')
$$v\leq\sk\otimes(cB)(x)\leq\Big(\bw_{y\in B}(cA)(y)\Big)\otimes(cB)(x)\leq(cA)(x),$$
{\em i.e.,} $x\in c^vA$. (C1) follows trivially from the definition of the closure tower, and for (C3) one puts $\CA:=\{A\},\; B:=c^vA$ to obtain, with (T'), $v\otimes c(c^vA)(x)\leq (cA)(x)$ for all $x\in X$. Hence, for $x\in c^u(c^vA)$ one may conclude $v\otimes u\leq (cA)(x)$, which means $x\in c^{v\otimes u}A.$

(2) (R') follows trivially from (C2). In order to show (T'), putting $$v_y:=\bv\{v\in\sV\;|\;\exists A\in \CA:\;y\in c^vA\},$$
for every $y\in B$ we obtain from (C1) $y \in c^{v_y}A$, for some  $A\in \CA$, and then, with $\tilde{v}:=\bw_{y'\in B}v_{y'}$ and (C0),
$y\in c^{v_y}A\subseteq c^{\tilde{v}}A\subseteq c^{\tilde{v}}(\bigcup\CA)$, so that
$B\subseteq c^{\tilde{v}}(\bigcup\CA)$. Now, for every $u\in\sV$, (C0) and (C3) give
 $c^u(B)\subseteq c^u(c^{\tilde{v}}(\bigcup\CA))\subseteq c^{\tilde{v}\otimes u}(\bigcup \CA)$. Consequently,
 for all $x\in c^uB$, we obtain $\tilde{v}\otimes u\leq c(\bigcup \CA)(x)$ and conclude
 $$\Big(\bw_{y\in B}\bv_{A\in \CA}\bv\{v\in\sV\;|\;y\in c^vA\}\Big)\otimes \bv\{u\in\sV\;|\;x\in c^uB\}=\tilde{v}\otimes \bv\{u\in\sV\;|\;x\in c^uB\}\leq c(\bigcup\CA)(x) ,$$
 as desired.

 (3) Given a $\sV$-closure space structure $c$ on $X$, let $(c^v)_{v\in\sV}$
  be the closure tower as in (1) and denote by $\tilde{c}$ the structure obtained from that tower as in (2). Since trivially
 $x\in c^{(cA)(x)}A$, one easily concludes $(\tilde{c}A)(x)=(cA)(x)$ for all $A\subseteq X, x\in X$. Conversely, starting with a closure tower $(c^v)_{v\in\sV}$, forming the corresponding $\sV$-closure space structure $c$ as in (2) and then its induced closure tower
 $(\tilde{c}^v))_{v\in\sV}$ as in (1), we conclude $$\tilde{c}^vA=\{x\in X\;|\;\bv\{u\in\sV\;|\;x\in c^uA\}\geq v\}\subseteq c^vA$$ for all $A\subseteq X$ from (C1), with the reverse inclusion holding trivially.

 Finally, that (M') implies the given continuity condition follows directly from the definitions. In turn, the continuity condition implies (M') when being exploited for $v:=(cA)(x)$, since then  $x\in c^vA$ and therefore $fx\in d^v(f(A))$,
 which means precisely (M').
\end{proof}

\begin{rem}\label{bottom}
(1) Note that, for a $\sV$-closure space $(X,c)$, one has $c^{\bot}A=X$ for all $A\subseteq X$ (including $A=\emptyset$).
Hence, in Proposition \ref{closuretowers}(2), it suffices to require (C0)--(C3) for all those $u,v,u_i \in\sV \;(i\in I)$ that are greater than $\bot$.

(2) If $c$ and $(c^v)_{v\in\sV}$ correspond to each other as in Proposition \ref{closuretowers}(1),(2), then (C3) may be written equivalently as
\begin{enumerate}
\item [{\rm (C3')}] $v\otimes c(c^vA)(x)\leq(cA)(x),$
\end{enumerate}
for all $x\in X, A\subseteq X, v\in\sV$. Indeed, from (C3) one obtains
$$v\otimes c(c^vA)(x)=v\otimes \bv\{u\in\sV\,|\,x\in c^u(c^vA)\}\leq\bv\{v\otimes u\,|\,u\in\sV, x\in c^{v\otimes u}A\}
\leq\bv\{w\in\sV\,|\,x\in c^wA\}=(cA)(x);$$
conversely, given (C3'), one has
$$ x\in c^u(c^vA)\Longrightarrow (cA)(x)\geq v\otimes c(c^vA)(x)
\geq v\otimes u\Longrightarrow x\in c^{v\otimes u}A.$$

(3) For Lawvere's quantale $[0,\infty]$, ordered by the natural $\geq$ and provided with $\otimes=+$, naturally extended to $\infty$, writing $\delta(x,A)=(cA)(x)$ one sees that condition (C3') coincides with (D4) (see Introduction).

\end{rem}

%\begin{defn}\label{app} A $\mathcal{V}$-approach space is an additive $\sV$-closure space $(X,c)$, that is,
%\begin{enumerate}
%\item[(E)] $\forall x\in X:c(\emptyset)(x)=\bot$,
%\item[(A)] $\forall A,B\subseteq X:c(A\cup B)(x)=cA(x)\vee cB(x)$.
%\end{enumerate}
%\end{defn}

We are now ready to describe $\sV$-approach spaces in terms of closure towers, provided that $\sV$ is {\em constructively completely distributive (ccd)}. Recall that the complete lattice $\sV$ is ccd if,
and only if,  $v=\bv\{u\in\sV\;|\;u\ll  v\}$ for every $v\in \sV$; here $u\ll v$ (``$u$ {\em totally below} $v$") means
$$\forall D\subseteq V:\;v\leq \bv D\Longrightarrow( \exists \,d\in D:u\leq d).$$
Every completely distributive complete lattice in the ordinary sense is ccd, with the validity of the converse implication being equivalent to the Axiom of Choice (see \cite{Wood, MonTop}).

\begin{thm}\label{approachclosure}
Let $\sV$ be constructively completely distributive. Then a $\sV$-closure space is a $\sV$-approach space
if, and only if, its closure tower $(c^v)_{v\in\sV}$ satisfies
\begin{enumerate}
\item[{\rm(C4)}] $c^v\emptyset=\emptyset$,
\item[{\rm(C5)}] $c^v(A\cup B)=\bigcap_{u\ll v}(c^uA\cup c^uB)$,
\end{enumerate}
for all $v\in\sV, v>\bot$, and $ A,B\subseteq X$.
\end{thm}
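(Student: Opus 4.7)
My plan is to prove the theorem by translating both sides through the mutual bijection of Proposition \ref{closuretowers}, i.e.\ $c^vA=\{x\in X\mid (cA)(x)\geq v\}$ and $(cA)(x)=\bigvee\{v\in\sV\mid x\in c^vA\}$, so that (C4), (C5) can be checked pointwise against the two defining equations of a $\sV$-approach space.

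I would dispose of (C4) first as the straightforward pointwise translation of $(c\emptyset)(x)=\bot$: since by Remark \ref{bottom}(1) one always has $c^\bot\emptyset=X$, the condition $(c\emptyset)(x)=\bot$ for all $x$ is equivalent to requiring $x\notin c^v\emptyset$ whenever $v>\bot$, i.e.\ (C4). The preservation of binary joins is trickier, and I would split it into the two inequalities. Monotonicity (C0) immediately gives $(cA)(x)\vee(cB)(x)\leq c(A\cup B)(x)$, so the real content of ``$c$ preserves binary joins'' is the reverse inequality; I aim to show that, under the closure-tower dictionary, this reverse inequality is equivalent to (C5).

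For the direction ($\Rightarrow$) I would argue: assuming $c(A\cup B)(x)=(cA)(x)\vee(cB)(x)$, to show $c^v(A\cup B)\subseteq\bigcap_{u\ll v}(c^uA\cup c^uB)$, take $x\in c^v(A\cup B)$ and any $u\ll v$; then $u\ll v\leq (cA)(x)\vee(cB)(x)$, so applying the definition of $\ll$ to the two-element set $D=\{(cA)(x),(cB)(x)\}$ forces $u\leq(cA)(x)$ or $u\leq(cB)(x)$, which means $x\in c^uA\cup c^uB$. Conversely, if $x\in\bigcap_{u\ll v}(c^uA\cup c^uB)$, every $u\ll v$ satisfies $u\leq(cA)(x)\vee(cB)(x)=c(A\cup B)(x)$; complete distributivity then gives $v=\bigvee\{u\mid u\ll v\}\leq c(A\cup B)(x)$, so $x\in c^v(A\cup B)$.

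For ($\Leftarrow$), assuming (C4), (C5), the proof of $(c\emptyset)(x)=\bot$ is immediate from (C4) combined with $c^\bot\emptyset=X$. To prove $c(A\cup B)(x)\leq(cA)(x)\vee(cB)(x)$, take any $v$ with $x\in c^v(A\cup B)$; by (C5), for each $u\ll v$ one has $x\in c^uA\cup c^uB$, hence $u\leq(cA)(x)\vee(cB)(x)$, and the ccd identity $v=\bigvee\{u\mid u\ll v\}$ yields $v\leq(cA)(x)\vee(cB)(x)$. Taking the supremum over such $v$ gives the desired inequality. The main (and really only) obstacle is the proper use of the ccd hypothesis: it is precisely the identity $v=\bigvee\{u\mid u\ll v\}$, and the splitting property of $\ll$ applied to a two-element set, that explain why (C5) must be stated in terms of $\ll$ rather than a plain intersection at level $v$; without it, neither direction of the join argument would go through.
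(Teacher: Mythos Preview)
Your proposal is correct and follows essentially the same route as the paper: both arguments translate the approach conditions pointwise via the closure-tower dictionary, using the ccd identity $v=\bigvee\{u\mid u\ll v\}$ together with the splitting property of $\ll$ on two-element sets. The only organizational difference is that the paper first observes that the inclusion $\bigcap_{u\ll v}(c^uA\cup c^uB)\subseteq c^v(A\cup B)$ holds in \emph{any} $\sV$-closure space (from (C0), (C1) and ccd), and then shows that the reverse inclusion is equivalent to finite-join preservation; you instead derive both inclusions of (C5) under the approach hypothesis, which is equally valid.
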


\begin{proof}
For the $\sV$-closure space $(X,c)$ to be a $\sV$-approach space means, by definition,
$$(c\emptyset)(x)=\bot\text{  and  }c(A\cup B)(x)=(cA)(x)\vee (cB)(x),$$
for all $A, B\subseteq X,\;x\in X$,  and from Proposition \ref{closuretowers} and Remark \ref{bottom} we recall
 $$ (cA)(x)=\bv\{v\in\sV\;|\; x\in c^vA\}=\bv\{v\in\sV\;|\;v>\bot,x\in c^vA\}.$$
 Trivially then, $(c\emptyset)(x)=\bot$ for all $x\in X$ if, and only if,
$c^v\emptyset=\emptyset$ for all $v>\bot$.

When $\sV$ is completely distributive, from (C1),(C0) one obtains, for all $v\in\sV, A, B \subseteq X,$
$$c^v(A\cup B)=\bigcap_{u\ll v}c^u(A\cup B)\supseteq \bigcap_{u\ll v}(c^uA\cup c^uB).$$
Hence, with the equivalences
\begin{align*}
&\forall x\in X:\;c(A\cup B)(x)\leq(cA)(x)\vee(cB)(x)\\
\iff&\forall x\in X,v\in\sV:\; (c(A\cup B)(x)\geq v\Rightarrow (cA)(x)\vee(cB)(x)\geq v)\\
\iff&\forall x\in X,v\in\sV:\; (c(A\cup B)(x)\geq v\Rightarrow\forall u\ll v:\, (cA)(x)\geq u \text{ or }(cB)(x)\geq u)\\
\iff&\forall v\in\sV:\;c^v(A\cup B)\subseteq \bigcap_{u\ll v}(c^uA\cup c^uB)
\end{align*}
the assertion of the Theorem follows from Proposition \ref{closuretowers}.
\end{proof}

\begin{rem}
(1) Of course, for $\sV$ ccd, (C0) follows from (C5) and is therefore not needed when characterizing $\sV$-approach spaces.

(2) For $v>\bot$, (C4) may be equivalently stated as
\begin{itemize}
\item[{\rm(C4)}] $c^v\emptyset = \bigcap_{u\ll v}\emptyset$,
\end{itemize}
and in this form
the requirement remains valid also when $v=\bot$: since there is no $u\ll \bot$ in $\sV$, trivially
$\bigcap_{u\ll\bot}\emptyset=X=c^{\bot}\emptyset$ (see Remark \ref{bottom}).
\end{rem}

When $\sV$ is completely distributive in the ordinary sense, then the conditions (C4), (C5) may be simplified, as follows. Recall that
an element $p\in \sV$ is {\em coprime} if
$$\forall D\subseteq \sV\text{  finite }:\;p\leq \bv D\Longrightarrow( \exists \,d\in D:p\leq d); $$
equivalently, if $p>\bot$, and $p\leq u\vee v$ always implies $p\leq u$ or $p\leq v$; or, equivalently, if
 $\{v\in\sV:v\not\geq p\}$ is a directed subset of $\sV$, that is: if any of its finite subsets has an upper bound in $\sV$. Note that, contrary to this definition, some authors regard also $\bot$ as coprime, but that does not affect the validity of the following well-known Proposition, for which one must grant the Axiom of Choice.

\begin{prop}{\rm (\cite{Gierz2003}, Theorem I-3.16.)}\label{coprime}
If $\sV$ is completely distributive, then $v=\bv\{p\in\sV\;|\;p\leq v, p \text{ coprime}\}$, for all $v\in\sV$.
\end{prop}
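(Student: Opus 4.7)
The plan is to reduce the claim to the following key lemma: for every $u\ll v$ with $u>\bot$, there is a coprime $p\in\sV$ with $u\leq p\leq v$. Granted this, complete distributivity gives $v=\bv\{u\in\sV\;|\;u\ll v\}$, so each such $u$ (the case $u=\bot$ being vacuous) is dominated by some coprime $\leq v$, yielding $v\leq\bv\{p\in\sV\;|\;p\leq v,\,p\text{ coprime}\}\leq v$, as required.

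To prove the lemma, I first establish the \emph{interpolation property} of $\ll$: whenever $u\ll v$, there is $w$ with $u\ll w\ll v$. Iterating ccd gives
\[
v=\bv\{w\;|\;w\ll v\}=\bv\{u'\;|\;\exists w\,(u'\ll w\ll v)\},
\]
and applying the totally-below property of $u\ll v$ to this join supplies $u\leq u'$ for some such $u'$; the routine fact $\leq\circ\ll\;\subseteq\;\ll$ then upgrades $u\leq u'\ll w$ to $u\ll w$, as desired.

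Given $u\ll v$ with $u>\bot$, I iterate interpolation (via dependent choice) to obtain a chain $u=x_0\ll x_1\ll x_2\ll\cdots$ with every $x_n\ll v$, and set $p:=\bv_n x_n$. Then $u\leq p\leq v$, and because $x_n\ll x_{n+1}\leq p$ together with $\ll\circ\leq\;\subseteq\;\ll$ yields $x_n\ll p$, one has $p>\bot$. To verify that $p$ is coprime, suppose $p\leq a\vee b$; each $x_n\ll p\leq a\vee b$ forces $x_n\leq a$ or $x_n\leq b$. A cofinality argument on the increasing chain then shows that one of the two alternatives holds for \emph{all} $n$ (if infinitely many indices land in $a$, every smaller index sits below such a later one, so $p\leq a$; symmetrically for $b$), whence $p\leq a$ or $p\leq b$. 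The extension to arbitrary finite joins is routine induction, with the empty-join case handled by $p>\bot$. The main conceptual obstacle is the interpolation property; once this is in hand, the remainder is the Dedekind-style cofinality trick just sketched.
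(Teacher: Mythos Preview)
The paper does not give its own proof of this proposition; it merely cites the result from Gierz et al.\ (Theorem I-3.16), so there is nothing in the paper itself to compare your argument against.

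Your proof is correct and is essentially the standard argument one finds in the domain-theoretic literature (including, in spirit, the cited reference): establish interpolation for $\ll$ from ccd, iterate to build an increasing $\ll$-chain between $u$ and $v$, take its supremum $p$, and use the pigeonhole/cofinality trick to show $p$ is coprime. The verification that each $x_n\ll p$ (so that $x_n\ll a\vee b$ yields $x_n\leq a$ or $x_n\leq b$) and that $p>\bot$ (handling the empty join) is done correctly. The only external assumption you invoke is dependent choice for the iterated interpolation, which is consistent with the paper's remark that ``one must grant the Axiom of Choice'' for this proposition.
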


Now we can characterize $\sV$-approach spaces in terms closure towers satisfying (C1),(C2),(C3), and the following conditions {$(C4')$, $(C5')$}:
\begin{thm}\label{coprimeapproach}
Let $\sV$ be completely distributive. Then a $\sV$-closure space $(X,c)$ is a $\sV$-approach space
if, and only if, its closure tower $(c^v)_{v\in\sV}$ satisfies
\begin{enumerate}
\item[$\mathrm{(C4')}$] $c^p\emptyset=\emptyset$,
\item[$\mathrm{(C5')}$] $c^p(A\cup B)=c^pA\cup c^pB$,
\end{enumerate}
for all coprime elements $p\in\sV$ and $ A,B\subseteq X$.
\end{thm}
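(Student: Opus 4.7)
The plan is to work directly with the bijection of Proposition \ref{closuretowers} between a $\sV$-closure structure $c$ and its closure tower $(c^v)_{v\in\sV}$, combined with the coprime decomposition of Proposition \ref{coprime}. Routing through Theorem \ref{approachclosure} is unnecessary: coprime elements match the binary joins appearing in the defining approach axioms exactly, which should make the argument cleaner than the totally-below version.

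For the forward implication, assume $(X,c)$ is a $\sV$-approach space and fix a coprime $p\in\sV$, so in particular $p>\bot$. If $x\in c^p\emptyset$ then $p\leq(c\emptyset)(x)=\bot$, a contradiction, which gives $\mathrm{(C4')}$. For $\mathrm{(C5')}$, the inclusion $c^pA\cup c^pB\subseteq c^p(A\cup B)$ is immediate from $\mathrm{(C0)}$, while $x\in c^p(A\cup B)$ yields $p\leq(c(A\cup B))(x)=(cA)(x)\vee(cB)(x)$, and coprimeness then forces $p\leq(cA)(x)$ or $p\leq(cB)(x)$, i.e.\ $x\in c^pA\cup c^pB$.

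For the converse, assume $\mathrm{(C4')}$ and $\mathrm{(C5')}$ hold for every coprime $p$, and fix $x\in X$. Proposition \ref{coprime} allows one to write
\[(c\emptyset)(x)=\bv\{p\leq(c\emptyset)(x)\;|\;p\text{ coprime}\};\]
any coprime $p$ in the indexing set would give $x\in c^p\emptyset=\emptyset$ by $\mathrm{(C4')}$, so the set is empty and $(c\emptyset)(x)=\bot$. The same device handles unions: writing
\[(c(A\cup B))(x)=\bv\{p\leq(c(A\cup B))(x)\;|\;p\text{ coprime}\},\]
applying $\mathrm{(C5')}$ to each contributing $p$ gives $x\in c^pA\cup c^pB$ and hence $p\leq(cA)(x)\vee(cB)(x)$; taking joins delivers $(c(A\cup B))(x)\leq(cA)(x)\vee(cB)(x)$. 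The reverse inequality follows from $\mathrm{(C0)}$ applied to $A,B\subseteq A\cup B$.

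The only real obstacle is bookkeeping: the translation $x\in c^vA\iff(cA)(x)\geq v$ must be applied consistently, and one should recall that Proposition \ref{coprime} requires ordinary complete distributivity (and the Axiom of Choice) to guarantee that every non-bottom element of $\sV$ has enough coprimes below it. No use of $\mathrm{(C1)}$--$\mathrm{(C3)}$ beyond what is already packaged into being a $\sV$-closure space is needed.
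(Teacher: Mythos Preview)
Your proposal is correct and follows essentially the same approach as the paper: both arguments use the translation $x\in c^vA\iff(cA)(x)\geq v$, the definition of coprime for the forward direction, and Proposition \ref{coprime} (coprime decomposition) for the converse. The paper presents each direction as a chain of biconditionals rather than splitting into forward and converse parts, but the mathematical content is identical.
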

\begin{proof}
Firstly, \begin{align*}\forall x\in X:(c\emptyset)(x)=\bot
&\iff\forall x\in X, v\in\sV,v>\bot: v\nleq (c\emptyset)(x)\\
&\iff\forall x\in X, p\in\sV, p\text{ coprime}:p\nleq (c\emptyset)(x)\quad (\text{Proposition }\ref{coprime})\\
&\iff \forall p\in\sV, p\text{ coprime}:c^p(\emptyset)=\emptyset.
\end{align*}
Secondly, since trivially, for all coprime $p\in\sV, A, B \subseteq X, x\in X$,
\[x\in c^p(A\cup B)\iff p\leq c(A\cup B)(x) \]
and
\begin{align*}x\in (c^pA\cup c^pB)&\iff p\leq (cA)(x)\text{ or }p\leq (cB)(x)\\
&\iff p\leq (cA)(x)\vee (cB)(x).
\end{align*}
with Proposition \ref{coprime} one obtains that \[\forall p\in\sV, p\text{ coprime}:c^p(A\cup B)=c^pA\cup c^pB\iff \forall x\in X:c(A\cup B)(x)=(cA)(x)\vee (cB)(x).\]
\end{proof}

\begin{exmp}
(1) For the terminal quantale {\sf 1} one obtains ${\sf 1}\text{-}{\bf App}={\sf 1}\text{-}{\bf Cls}\cong{\bf Set}$.

(2) For the two-element chain ${\sf 2}$ (considered as a quantale with its frame structure, so that $\otimes=\wedge$), we see that ${\sf 2}\text{-}{\bf Cls}={\bf Cls}$ is the category of closure spaces, {\em i.e.,} of sets $X$ that come with an extensive, monotone and idempotent closure operation $c:\sP X\to \sP X$, and that
${\sf 2}\text{-}{\bf App}={\bf Top}$ is the category of topological spaces (presented in terms of a finitely additive closure operation), and their continuous maps.
\end{exmp}

For Lawvere's quantale $[0,\infty]$ we obtain from Lemma \ref{closureptw}, Proposition \ref{closuretowers}, Remark \ref{bottom}(2) and Theorem \ref{coprimeapproach} the following new characterizations of $[0,\infty]$-closure and approach spaces:

\begin{cor}\label{approach}
{\rm (1)} A $[0,\infty]$-closure space $X$ may be described in terms of a point-set-distance function $\delta$ satisfying
\begin{enumerate}
\item [{\rm (D1)}] $\delta(x,\{x\})=0,$
\item [{\rm (T')}] $\delta(x,\bigcup\CA)\leq {\rm sup}_{y\in B} {\rm inf}_{A\in \CA}(\delta(x,B)+\delta(y,A)).$
\end{enumerate}
for all $x\in X, B\subseteq X, \CA\subseteq \sP X$.
The $[0,\infty]$-closure space $X$ is a $[0,\infty]$-approach space if, and only if,  $\delta$ satisfies also
\begin{enumerate}
\item [{\rm (D2)}] $\delta(x,\emptyset)=\infty$,
\item [{\rm (D3)}] $\delta(x,A\cup B)={\rm min}\{\delta(x,A),\delta(x,B)\}$,
\end{enumerate}
for all $x\in X,\, A,B\subseteq X$; equivalently, if $X$ is an approach space in the ordinary sense, so that $\delta$ satisfies {\rm (D1)--(D4)}.

{\rm (2)} A $[0,\infty]$-closure space $X$ is equivalently described by a closure tower $(c^{\alpha}:\sP X\to\sP X)_{\alpha\in[0,\infty]}$ satisfying
\begin{enumerate}
\item[{\rm (C0)}]
$\text{if  }B\subseteq A, \text{ then  }c^{\alpha}B\subseteq c^{\alpha}A,$
\item[{\rm (C1)}] $\text{if  }{\rm inf}_{i\in I}{\beta}_i\leq\alpha, \text{ then   }\bigcap_{i\in I}c^{\beta_i}A\subseteq c^{\alpha}A,
$\item[{\rm (C2)}] $A\subseteq c^0A,$
\item [{\rm (C3)}] $c^{\alpha}c^{\beta}A\subseteq c^{\alpha+ \beta}A,$
\end{enumerate}
for all $A\subseteq X$ and $ \alpha,\beta, \beta_i \in [0,\infty]\; (i\in I)$. For $X$ to be an approach space, $(c^{\alpha})_{\alpha\in [0,infty]}$ must satisfy {\rm (C1)--(C3)} and
\begin{enumerate}
\item [{\rm (C4)}] $c^{\alpha}(\emptyset)=\emptyset$,
\item [{\rm (C5)}] $c^{\alpha}(A\cup B)=c^{\alpha}A\cup c^{\alpha}B$,
\end{enumerate}
for all $A,B\subseteq X, \alpha<\infty.$

{\rm (3)} A map $f:X\to Y$ of $[0,\infty]$-closure spaces $X,Y$, presented in terms of their respective closure towers
$(c^{\alpha}), (d^{\alpha})$, is contractive if, and only if, $f(c^{\alpha}A)\subseteq d^{\alpha} (f(A))$ for all $A\subseteq X, \alpha \in [0,\infty]$.
\end{cor}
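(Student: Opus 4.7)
The plan is to specialize the preceding general results to the quantale $\sV = [0,\infty]$ equipped with the reverse of its natural order, $\otimes = +$, and unit $\sk = 0$. Under this ordering, $\bot$ corresponds to $\infty$, the supremum $\bv$ in $\sV$ is the natural infimum, the infimum $\bw$ in $\sV$ is the natural supremum, and $\leq_\sV$ reads as natural $\geq$; moreover $[0,\infty]$ is completely distributive, so that Theorem \ref{coprimeapproach} is available. Throughout, I identify $c : \sP X \to [0,\infty]^X$ with the point-set-distance function $\delta(x, A) := (cA)(x)$.

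For part (1), I apply Lemma \ref{closureptw}. The reflexivity condition (R') becomes $0 \geq \delta(x, \{x\})$, hence (D1). The transitivity condition (T') of the lemma translates, via $\otimes = +$ and $\leq_\sV$ as $\geq$, to $\delta(z, \bigcup \CA) \leq \sup_{y \in B} \inf_{A \in \CA} \delta(y, A) + \delta(z, B)$, and pulling the constant $\delta(z, B)$ inside the sup/inf gives the (T') stated in the corollary. The $\sV$-approach clause of Definition \ref{defnlaxalgebra}(2) requires $c\emptyset = \bot$ and $c(A \cup B) = cA \vee cB$ pointwise, which under the correspondence become precisely (D2) and (D3). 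To reconcile with Lowen's original axioms, I invoke Remark \ref{bottom}(2): condition (C3) of the closure-tower language is interderivable with (C3'), and (C3') read in $[0,\infty]$ is exactly (D4) with $A^{(\varepsilon)} = c^\varepsilon A$. Conversely, given (D1)--(D4), one recovers (T') from (D4) by a short calculation using (D3) and the monotonicity of $\delta$ in its set variable: set $\varepsilon := \sup_{y \in B} \inf_{A \in \CA} \delta(y, A)$, so that $B \subseteq (\bigcup \CA)^{(\varepsilon)}$, whence $\delta(x, \bigcup \CA) \leq \delta(x, (\bigcup \CA)^{(\varepsilon)}) + \varepsilon \leq \delta(x, B) + \varepsilon$.

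For part (2), the closure-tower description transcribes Proposition \ref{closuretowers} with $\sk = 0$ and $\otimes = +$: (C2) reads $A \subseteq c^0 A$, (C3) reads $c^\alpha c^\beta A \subseteq c^{\alpha+\beta} A$, and (C1) becomes $\inf_i \beta_i \leq \alpha \Rightarrow \bigcap_i c^{\beta_i} A \subseteq c^\alpha A$ since $\bv$ in $\sV$ is the natural infimum. For the approach clauses, I apply Theorem \ref{coprimeapproach} after identifying the coprime elements of $[0,\infty]$: $p$ is coprime iff $\{v \in \sV : v \not\geq_\sV p\} = \{\alpha : \alpha > p\}$ is directed in the $\sV$-order, and since $\min(\alpha_1, \alpha_2) > p$ whenever both $\alpha_i > p$, this holds precisely when $p < \infty$. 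Hence (C4'), (C5') specialize to the stated (C4), (C5) for all $\alpha < \infty$; the remark following Theorem \ref{approachclosure} further justifies dropping (C0) once (C5) is present.

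Part (3) is Proposition \ref{closuretowers}(3) read verbatim in $[0,\infty]$. The only real hazard in the whole proof is the pervasive order-reversal in $\sV = [0,\infty]$, which must be kept straight when rewriting suprema, infima, $\bot$, and the notion of coprime element; once that dictionary is fixed, the corollary is a direct specialization of the abstract results already at our disposal.
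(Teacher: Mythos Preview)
Your proposal is correct and follows essentially the same route as the paper, which simply records that the corollary is obtained by specializing Lemma \ref{closureptw}, Proposition \ref{closuretowers}, Remark \ref{bottom}(2) and Theorem \ref{coprimeapproach} to $\sV=[0,\infty]$. You supply more detail than the paper does---in particular the explicit identification of the coprime elements of $[0,\infty]$ as the $\alpha<\infty$, and the short argument deriving (T') back from Lowen's (D1)--(D4)---but these are the expected translations under the order-reversal dictionary and do not constitute a different approach.
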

In summary, $[0,\infty]\text{-}{\bf App}={\bf App}$ is the category of approach spaces (as defined in terms of point-set-distances) that may be equivalently described in terms of closure towers.

The quantale $[0,\infty]$ is of course isomorphic to the unit interval $[0,1]$, ordered by the natural $\leq$ and provided with the ordinary multiplication as $\otimes$.
Both, $[0,\infty]$ and $[0,1]$ are embeddable into the quantale $\bf {\Delta}$ of all {\em distance distribution functions} $\varphi: [0,\infty]\to[0,1]$, required to satisfy the left-continuity condition $\varphi(\beta)={\rm sup}_{\alpha<\beta}\varphi(\alpha)$, for all $\beta\in [0,\infty]$. Its order is inherited from $[0,1]$, and its monoid structure is given by the commutative
{\em convolution} product \[(\varphi\odot\psi)(\gamma)={\rm sup}_{\alpha+\beta\leq\gamma}\varphi(\alpha)\psi(\beta);\]
the $\odot$-neutral function $\kappa$ satisfies $\kappa(0)=0$ and $\kappa(\alpha)=1$ for all $\alpha >0$. We note $\kappa=\top$ in $\bf{\Delta}$ (so $\bf{\Delta}$ is integral), while the bottom element in ${\bf {\Delta}}$ has constant value $0$; we write $\bot=0$.
The significance of the quantale homomorphisms
$\sigma:[0,\infty]\to {\bf {\Delta}}$
and $\tau:[0,1]\to {\bf {\Delta}}$,
defined by $\sigma(\alpha)(\gamma)=0\text{ if }\gamma\leq\alpha\text{, and }1$ otherwise, and
$\tau(u)(\gamma)=u\text{ if }\gamma>0\text{, and }0$ otherwise, lies in the fact that every $\varphi\in {\bf{\Delta}}$ has a presentation 
$$\varphi=\bv_{\alpha\in[0,\infty]}\sigma(\alpha)\odot\tau(\varphi(\alpha))
=\bv_{\alpha\in(0,\infty)}\sigma(\alpha)\odot\tau(\varphi(\alpha)).$$ 
As a consequence (that was noted in \cite{LaxDistLaws}), one has a presentation of $\bf {\Delta}$ as a coproduct of $[0,\infty]$ and $[0,1]$ in the category {\bf Qnt} of quantales and their homomorphisms, with coproduct injections $\sigma$ and $\tau$, respectively.

The lattice $\bf {\Delta}$ is constructively completely distributive, hence completely distributive in the presence of the Axiom of Choice. The above presentation displays $\varphi$ as a join of coprime elements. Indeed, a distance distribution function $\pi$ is coprime if, and only if, there are $\alpha\in(0,\infty)$ and $u\in [0,1]$ such that $\pi=\sigma(\alpha)\odot\tau(u)$, {\em i.e.},
$\pi(\gamma)=\left\{
\begin{array}{ll}
0 & \text{if }\gamma\leq\alpha,\\
u & \text{if }\gamma> \alpha.
\end{array}
\right.$

A {\em probabilistic approach space} \cite{Jager, JagerApp} is a set $X$ equipped with a function $\delta: X\times\sP X\to {\bf {\Delta}}$, subject to
\begin{enumerate}[(PD1)]
\item $\forall x\in X: \;\delta(x,\{x\})=\kappa$,
\item $\forall x \in X: \;\delta(x,\emptyset)=0$,
\item $\forall x \in X,\; A, B\subseteq X:\;\delta(x,A\cup B)=\delta(x,A)\vee\delta(x,B)$,
\item $\forall x\in X, \; A \subseteq X,\; \varphi\in {\bf {\Delta}}:\;\delta(x,A)\geq \delta(x,A^{(\varphi)})\odot \varphi,
\text{ where } A^{(\varphi)}:=\{x\in X\;|\;\delta(x,A)\geq\varphi\}$.
\end{enumerate}
Calling a map $f:(X,\delta)\to (Y,\epsilon)$ of probabilistic approach spaces {\em contractive} when $\delta(x,A)\leq\epsilon(fx,f(A))$ for all $x\in X, A\subset X$, we obtain the category $\bf{ProbApp}$.

In analogy to Corollary \ref{approach}, the general results of this section lead to the following alternative descriptions of probabilistic approach spaces and their morphisms.

\begin{cor}\label{probapp}
{\rm (1)} A function $\delta:X\times \sP X\to{\bf{\Delta}}$ is a probabilistic approach structure on a set $X$ if, and only if, $\delta$ satisfies {\rm (PD1),(PD2),(PD3)} and
\begin{enumerate}
\item[{\rm (T')}] $\forall \CA\subseteq \sP X,\, B\subseteq X:\,\delta(x,\bigcup\CA)\geq \bw_{y\in B} \bv_{A\in \CA}(\delta(x,B)\odot\delta(y,A)).$
\end{enumerate}
Equivalently, the function $c:\sP X\to{\bf{\Delta}}^X$ with $(cA)(x)=\delta(x,A)$ makes $(X,c)$ a ${\bf{\Delta}}$-approach space.

{\rm (2)} The probabilistic approach structure on a set $X$ may be described equivalently by a family of functions $c^{\varphi}:\sP X\to \sP X\; (\varphi\in {\bf{\Delta}})$ satisfying
\begin{enumerate}
\item[{\rm (PC1)}] $\text{if  }\varphi\leq\bv_{i\in I}{\psi}_i, \text{ then   }\bigcap_{i\in I}c^{\psi_i}A\subseteq c^{\varphi}A,
$\item[{\rm (PC2)}] $A\subseteq c^{\kappa}A,$
\item [{\rm (PC3)}] $c^{\varphi}c^{\psi}A\subseteq c^{\varphi\odot \psi}A,$
\item [{\rm (PC4)}] $c^{\pi}(\emptyset)=\emptyset$,
\item [{\rm (PC5)}] $c^{\pi}(A\cup B)=c^{\pi}A\cup c^{\pi}B$,
\end{enumerate}
for all $A,B\subseteq X,\, \varphi,\psi,\pi\in{\bf{\Delta}},\, \pi$ coprime.

{\rm (3)} A map $f:X\to Y$ of probabilistic spaces $X,Y$, presented in terms of their respective closure towers
$(c^{\varphi}), (d^{\varphi})$, is contractive if, and only if, $f(c^{\varphi}A)\subseteq d^{\varphi} (f(A))$ for all $A\subseteq X, \varphi \in {\bf{\Delta}}$.
\end{cor}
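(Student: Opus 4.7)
The plan is to derive Corollary \ref{probapp} by specializing the general theory already developed (Lemma \ref{closureptw}, Proposition \ref{closuretowers}, Theorem \ref{approachclosure}, Theorem \ref{coprimeapproach}) to the quantale $\sV=\mathbf{\Delta}$, which has been shown to be constructively completely distributive (hence completely distributive, given Choice) with a rich supply of coprimes $\sigma(\alpha)\odot\tau(u)$. The essential content is a translation between the traditional Lowen-style axioms $(\mathrm{PD}1)$--$(\mathrm{PD}4)$ for probabilistic approach spaces and the $\mathbf{\Delta}$-closure/approach axioms $(\mathrm{R}')$, $(\mathrm{T}')$, plus finite join preservation; the closure-tower description then follows mechanically.

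For part (1), I would verify the axiom translations one by one under the identification $(cA)(x)=\delta(x,A)$. First, since $\mathbf{\Delta}$ is integral with $\kappa=\top$, the inequality $(\mathrm{R}'):\sk\le c(\{x\})(x)$ is forced to be the equality $\delta(x,\{x\})=\kappa$, that is $(\mathrm{PD}1)$. Second, $(\mathrm{PD}2)$ together with $(\mathrm{PD}3)$ is literally the condition that $c:\sP X\to\mathbf{\Delta}^X$ preserves finite joins, which is precisely the extra condition distinguishing $\mathbf{\Delta}\text{-}\mathbf{App}$ from $\mathbf{\Delta}\text{-}\mathbf{Cls}$ in Definition \ref{defnlaxalgebra}. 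Third, and this is the delicate step, I would argue that in the presence of $(\mathrm{PD}1)$--$(\mathrm{PD}3)$ the remaining axiom $(\mathrm{PD}4)$ is equivalent to $(\mathrm{T}')$. One direction is straightforward: $(\mathrm{PD}4)$ is nothing but $(\mathrm{C}3')$ of Remark \ref{bottom}(2), so by Remark \ref{bottom}(2) and Proposition \ref{closuretowers} it yields $(\mathrm{C}3)$; the remaining conditions $(\mathrm{C}0)$--$(\mathrm{C}2)$ needed for $(\mathrm{T}')$ come for free, namely $(\mathrm{C}0)$ from $(\mathrm{PD}3)$, $(\mathrm{C}2)$ from $(\mathrm{PD}1)$ and $(\mathrm{PD}3)$, and $(\mathrm{C}1)$ tautologically. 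For the reverse, specializing $(\mathrm{T}')$ at $\CA=\{A\}$ and $B=c^v A$ (and using $(\mathrm{C}2)$) recovers $(\mathrm{C}3')$, hence $(\mathrm{PD}4)$.

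For part (2), once (1) is established, the description via a closure tower is a direct application of Theorem \ref{coprimeapproach}: the axioms $(\mathrm{PC}1)$--$(\mathrm{PC}3)$ are exactly $(\mathrm{C}1)$--$(\mathrm{C}3)$ transcribed with $\otimes=\odot$ and $\sk=\kappa$, while $(\mathrm{PC}4)$--$(\mathrm{PC}5)$ are $(\mathrm{C}4')$--$(\mathrm{C}5')$ quantified over the coprime elements $\pi\in\mathbf{\Delta}$. The omission of $(\mathrm{C}0)$ in the list is legitimate by Remark 2.11(1), since for a completely distributive $\sV$ the monotonicity condition follows from $(\mathrm{PC}5)$. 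Part (3) is then an immediate instance of the continuity clause in Proposition \ref{closuretowers}(3).

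The main obstacle, as anticipated, will be the axiom translation in part (1): one must carefully keep track of which of the probabilistic axioms are needed at each step, so as to justify that $(\mathrm{PD}4)$ and $(\mathrm{T}')$ can be interchanged without circularity. Everything else is a transparent specialization of the already established general results to the concrete quantale $\mathbf{\Delta}$.
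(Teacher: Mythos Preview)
Your proposal is correct and follows exactly the approach the paper intends: the paper does not give a separate proof of this corollary but simply announces it as the specialization of Lemma~\ref{closureptw}, Proposition~\ref{closuretowers}, Remark~\ref{bottom}(2) and Theorem~\ref{coprimeapproach} to the completely distributive integral quantale $\mathbf{\Delta}$, just as Corollary~\ref{approach} was obtained for $[0,\infty]$. Your axiom-by-axiom translation---in particular identifying (PD4) with (C3$'$) via the commutativity of $\odot$ and then using the equivalence (C3)$\Leftrightarrow$(C3$'$) together with (C0)--(C2) to recover (T$'$)---is precisely the detail the paper leaves implicit.
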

In summary, ${\bf ProbApp}={\bf{\Delta}}\text{-}{\bf App}$, and contractivity of a map is equivalently described by continuity with respect to ${\bf{\Delta}}$-closure towers.

\section{$\sV$-approach spaces via ultrafilter convergence}
{\em Throughout this section, the quantale $\sV$ is assumed to be completely 
distributive.} 

We let $\bbU=(\sU,\Sigma, \dot{(\text{-})})$ denote the ultrafilter monad on $\bf{Set}$. Hence, $\sU X$ is the set of ultrafilters on the set $X$, and the effect of $\sU$ on a map $f:X\to Y$ and the monad structure of $\sU$ are described by
\begin{align*}
&\sU f:\sU X\to\sU Y,\quad\frak{x}\mapsto f[\frx],\quad(B\in f[\frx]\iff f^{-1}B\in\frx),\\
&\dot{(\text{-})}: X\to\sU X, \quad x\mapsto\dot{x},\quad (A\in\dot{x}\iff x\in A),\\
&\Sigma_X: \sU\sU X\to\sU X,\quad \frak{X}\mapsto \Sigma\frak{X},
\quad(A\in\Sigma\frak{X}\iff\{\frak{x}\in\sU X\;|\;A\in\frak{x}\}\in\frak{X}),
\end{align*}
for all $x\in X$, $\frx\in\sU X$, $\frak{X}\in\sU\sU X$, $A\subseteq X, B\subseteq Y$.

\begin{prop}\label{Ufilterdistributes}
The ultrafilter monad $\bbU$ distributes laxly over the $\sV$-powerset monad $\bbP_{\sV}$, via
\[\beta_X:\sU(\sV^X)\to\sV^{\sU X}, \quad (\beta_X\frak{s})(\frak{x})=\bigwedge_{S\in\frak{s}\atop A\in\frak{x}}\bigvee_{\sigma\in S\atop x\in A}\sigma(x)\quad(\frak{s}\in\sU(\sV^X),\; \frak{x}\in\sU X).\]
\end{prop}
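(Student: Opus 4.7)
The plan is to verify the six conditions (a)--(f) from the definition of a lax distributive law for $\lam=\beta$, following the pattern of the proof of Proposition 2.2. Condition (f) (monotonicity) is immediate from the pointwise formula, since $g\leq h$ implies that every inner supremum in $(\beta_X\cdot\sU g(\fry))(\frx)$ is bounded by the corresponding inner supremum built from $h$, once one unwinds what $\sU g$ and $\sU h$ do at the level of ultrafilter membership.

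For the two unit axioms, I would handle (b) and (d) by direct calculation. For (b), one computes $(\beta_X\cdot\sU\sy_X(\frx))(\fry)=\bw_{A\in\sU\sy_X(\frx),B\in\fry}\bv_{\sigma\in A,y\in B}\sigma(y)$ and uses that $\sU\sy_X(\frx)$ contains, for each $A\in\frx$, the set $\{\sy_X x\mid x\in A\}$; evaluating $\sy_X x$ at $y$ shows this bound is $\sk$ when $\frx=\fry$ (they share a witness $x\in A\cap B$) and otherwise reduces to $\bot$ harmlessly, yielding $(\sy_{\sU X}\frx)(\fry)$. For (d), one notes that $\dot{(-)}_{\sV^X}(\sigma)=\dot{\sigma}$, and then $(\beta_X\dot{\sigma})(\frx)=\bw_{A\in\frx}\bv_{x\in A}\sigma(x)$, which dominates $(\dot{(-)}_X)_!(\sigma)(\frx)$ (the latter being $\sigma(x)$ if $\frx=\dot{x}$ and $\bot$ otherwise). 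Condition (a) follows by expanding both sides and using the characterization $B\in f[\frx]\iff f^{-1}B\in\frx$ together with $f_!(\sigma)(y)=\bv_{x\in f^{-1}y}\sigma(x)$; the inequality $(\sU f)_!\cdot\beta_X\leq\beta_Y\cdot\sU(f_!)$ then drops out from a sup-inf rearrangement analogous to part (a) of Proposition~\ref{powerdistributes}.

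The harder conditions are (c) and (e), both of which require genuine use of ultrafilter arithmetic, in particular the fact that for an ultrafilter $\frX\in\sU\sU X$, the condition $\CA\in\frX$ determines membership in $\Sigma\frX$ via $\{A\subseteq X\mid\{\frx\in\sU X\mid A\in\frx\}\in\frX\}$. For (e), the natural strategy is to show, for $\frX\in\sU\sU(\sV^X)$ and $\fry\in\sU X$, that every lower bound produced by iterating $\beta$ appears as a lower bound in $\beta_X\cdot\Sigma_{\sV^X}$, by carefully constructing, from families $(\CS_\sigma)_{\sigma\in\sU X}$ witnessing inner suprema, a single set in $\Sigma\sU\overleftarrow{(\sU\beta_X\frX)}$ that witnesses both bounds simultaneously; the key input will be that ultrafilters are closed under finite intersections. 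Condition (c) is analogous but at the level of $\bbP_\sV$: one unpacks $\sfs_{\sU X}$, $\sfs_X$, and the induced $(\beta_X)_!$, and uses $\otimes$-preservation of suprema together with swapping $\bw$ and $\otimes$ (which is where complete distributivity of $\sV$ enters, exactly as in part (c) of Proposition~\ref{powerdistributes}).

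The main obstacle I anticipate is condition (e), since it requires producing a single witness-set inside $\Sigma$ of a complicated ultrafilter on $\sU(\sV^X)$. A cleaner alternative, which I would pursue if the direct argument becomes unwieldy, is to invoke Remark~\ref{lax ext} and observe that $\beta_X$ is precisely the lax-extension incarnation of the classical Barr extension
\[\hat{\sU}r(\frx,\fry)=\bw_{A\in\frx,B\in\fry}\bv_{x\in A,y\in B}r(x,y)\]
of the ultrafilter functor to $\sV\text{-}\mathbf{Rel}$; under the bijection of Remark~\ref{lax ext}, conditions (a)--(f) for $\beta$ translate to the lax-functoriality of $\hat{\sU}$ together with its lax compatibility with $\dot{(-)}$ and $\Sigma$, which are standard facts recorded in \cite{MonTop}. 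This reduces the verification to a single computation checking that $\overleftarrow{\hat{\sU}r}=\beta_X\cdot\sU\overleftarrow{r}$, after which the six axioms are imported wholesale.
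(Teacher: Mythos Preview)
Your overall plan---verify (a)--(f) directly---is exactly what the paper does, and your sketches of (a), (b), (d), (f) are adequate. The alternative route via Remark~\ref{lax ext} and the Barr extension is also perfectly valid and would indeed import the result wholesale from \cite{MonTop}; the paper does not take this shortcut but records the Barr extension immediately afterwards (Remark~\ref{Barr extension}).

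There is, however, a genuine gap in your treatment of (c). You describe the argument as ``swapping $\bw$ and $\otimes$ \dots\ exactly as in part (c) of Proposition~\ref{powerdistributes}'', but that proof does \emph{not} use complete distributivity at all---it is a straightforward chain of inequalities valid in any quantale. Here the situation is essentially different: after unpacking both sides, one must show that for given $\frak{s}\in\sU(\sV^X)$, $\bbS\in\frak{w}$, $A\in\frx$ and $u\ll\beta_{\sV^X}(\frak{w})(\frak{s})$, $v\ll\beta_X(\frak{s})(\frx)$, there exist $\Phi\in\bbS$, $\sigma\in\sV^X$, $x\in A$ with $u\otimes v\leq\Phi(\sigma)\otimes\sigma(x)$. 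The paper uses $\ll$ to choose, for each $S\in\frak{s}$, witnesses $\Phi_S\in\bbS,\sigma_S\in S$ with $u\leq\Phi_S(\sigma_S)$ and $\tau_S\in S,x_S\in A$ with $v\leq\tau_S(x_S)$; the crucial ultrafilter step is then that the sets $M=\{\sigma_S\mid S\in\frak{s}\}$ and $N=\{\tau_S\mid S\in\frak{s}\}$ both meet every member of $\frak{s}$, hence belong to $\frak{s}$ by maximality, so $M\cap N\neq\emptyset$. This maximality argument is the heart of (c) and is nowhere hinted at in your proposal.

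Conversely, you overcomplicate (e). No witness-construction of the kind you describe is needed: given $S\in\frak{s}=\Sigma_{\sV^X}\frak{S}$ and $A\in\frx$ with $\Sigma_X\frak{X}=\frx$, the definition of $\Sigma$ immediately supplies $\CS_0\in\frak{S}$ and $\CA_0\in\frak{X}$ with $S\in\frak{t}$ and $A\in\fry$ for all $\frak{t}\in\CS_0,\fry\in\CA_0$, and the required inequality follows by bounding the iterated $\bw\bv$ at this single pair $(\CS_0,\CA_0)$. Complete distributivity is not used in (e).
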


\begin{proof} We verify the defining conditions (a)--(f) of Section 2.

(a) %$(\bbU f)_!\cdot\beta_X(\frak{s})(\frak{b})=\bv_{\bbU f(\frak{a})=\frak{b}}\beta_X(\frak{s})(\frak{a})=\bv_{\bbU f(\frak{a})=\frak{b}}\bw_{S\in\frak{s}\atop A\in\frak{a}}\bv_{\mu\in S\atop a\in A}\mu(a).$
With $f[\frak{x}]=\frak{y}$ one has
\[(\beta_Y\cdot(\sU f_!)(\frak{s}))(\frak{y})
=\bw_{T\in f_![\frak{s}]\atop B\in\frak{y}}\bv_{\tau\in T\atop y\in B}\tau(y)
=\bw_{S\in\frak{s}\atop A\in\frak{x}}\bv_{\sigma\in S\atop x\in A}f_!(\sigma)(fx)
\geq \bw_{S\in\frak{s}\atop A\in\frak{x}}\bv_{\sigma\in S\atop x\in A}\sigma(x).
\]
Consequently,
\[(\beta_Y\cdot(\sU f_!)(\frak{s}))(\frak{y})\geq\bv_{f[\frak{x}]=\frak{y}}\bw_{S\in\frak{s}\atop A\in\frak{x}}\bv_{\sigma\in S\atop x\in A}\sigma(x)=\bv_{ f[\frak{x}]=\frak{y}}\beta_X(\frak{s})(\frak{x})=((\sU f)_!\cdot\beta_X(\frak{s}))(\frak{y}).\]

(b) Since, for $\frx,\fry\in \sU X$, one has $\sy_{\sU X}(\frak{x})(\frak{y})=\sk$ if $\frak{y}=\frak{x}$, and $\bot$ otherwise,
$\sy_{\sU X}\leq\beta_X\cdot\sU\sy_X$ follows from 
\[(\beta_X\cdot\sU\sy_X(\frak{x}))(\frak{x})=\bw_{S\in\sy_X[\frak{x}]\atop A\in \frak{x}}\bv_{\sigma\in S\atop x\in A}\sigma(x)=\bw_{A,B\in\frak{x}}\bv_{x\in A\atop y\in B}\sy_X(y)(x)\geq\sk,\]
with the last inequality following from $A\cap B\neq\emptyset$ for all $A, B\in\frak{x}$.

(c) For all $\frak{w}\in \sU(\sV^{\sV^X})$ and $\frak{x}\in\sU X$,
\begin{align*}
(\sfs_{\sU X}\cdot(\beta_X)_!\cdot\beta_{\sV^X}(\frak{w}))(\frak{x})
&=\bv_{\rho\in\sV^{\sU X}}((\beta_X)_!\cdot\beta_{\sV^X}(\frak{w}))(\rho)\otimes\rho(\frak{x})\\
&=\bv_{\rho\in\sV^{\sU X}}\bv_{\frak{s}\in\sU(\sV^X)\atop\beta_X(\frak{s})=\rho}\beta_{\sV^X}(\frak{w})(\frak{s})\otimes\rho(\frak{x})\\
&=\bv_{\frak{s}\in\sU(\sV^X)}\beta_{\sV^X}(\frak{w})(\frak{s})\otimes\beta_X(\frak{s})(\frak{x})\\
&=\bv_{\frak{s}\in\sU(\sV^X)}
\Big(\bw_{\bbS\in\frak{w}\atop S\in\frak{s}}\bv_{\Phi\in\bbS\atop \sigma\in S}\Phi(\sigma)\Big)\otimes\Big(\bw_{S\in\frak{s}\atop A\in\frak{x}}\bv_{\sigma\in S\atop x\in A}\sigma(x)\Big),
\end{align*}
while
\[(\beta_X\cdot\sU\sfs_X(\frak{w}))(\frak{x})=\bw_{S\in\sfs_X[\frak{w}]\atop A\in\frx}\bv_{\sigma\in S\atop x\in A}\sigma(x)
=\bw_{\bbS\in\frak{w}\atop A\in \frx}\bv_{\Phi\in\bbS\atop x \in A}\sfs_X(\Phi)(x)
=\bw_{\bbS\in\frak{w}\atop A\in \frx}\bv_{\Phi\in\bbS\atop x \in A}\bv_{\sigma\in\sV^X}\Phi(\sigma)\otimes\sigma(x).
\]
Consequently, in order for us to conclude $ (\sfs_{\sU X}\cdot(\beta_X)_!\cdot\beta_{\sV^X}(\frak{w}))(\frak{x})\leq
(\beta_X\cdot\sU\sfs_X(\frak{w}))(\frak{x})$, it suffices to show that, given any $\frak{s}\in\sU(\sV^X), \bbS\in\frak{w}, A\in \frx$, as well as $u\ll\beta_{\sV^X}(\frak{w}),v\ll\beta_X(\frak{s})(\frx)$ in $\sV$, that there are $\Phi\in\bbS,\sigma\in\sV^X,x\in A$ with $u\otimes v\leq\Phi(\sigma)\otimes\sigma(x)$.
But indeed, from the stated hypothesis on $u,v\in\sV$, for all $S\in\frak{s}$ one obtains $\Phi_S\in\bbS,\sigma_S\in S$ with $u\leq\Phi_S(\sigma_S)$, and $\tau_S\in S, x_S\in A$ with $v\leq\tau_S(x_S)$.  
Now, the set $M=\{\sigma_S\;|\;S\in\frak{s}\}$ satisfies $M\cap S\neq\emptyset$ for all $S\in\frak{s}$ and must therefore belong to $\frak{s}$ 
(since, otherwise, we could find an ultrafilter properly containing $\frak{s}$); likewise, $N=\{\tau_S\;|\;S\in\frak{s}\}\in\frak{s}$. Consequently, $M\cap N\neq\emptyset$, from which one derives the needed claim.

(d) Let $\sigma\in\sV^X, \frx\in\sU X$. If $\frx=\dot{x}$ for $x\in X$, then
\[\dot{(\text{-})}_!(\sigma)(\frx)=\bv_{y\in X\atop\dot{y}=\frx}\sigma(y)=\sigma(x)=\bw_{S\in\dot{\sigma}\atop A\in\frx}\bv_{\tau\in S\atop y\in A}\tau(y)
=\beta_X(\dot{\sigma})(\frx);\]
otherwise $\dot{(\text{-})}_!(\sigma)(\frx)=\bot$, and the needed inequality holds trivially.

(e) If for $\frak{X}\in\sU\sU X,\frak{S}\in\sU\sU(\sV^X)$ we have $\Sigma_X(\frak{X})=\frx$, $\Sigma_{\sV^X}(\frak{S})=\frak{s}$, then for any given $S\in\frak{s}, A\in\frak{x}$, there are $\CS_0\in\frak{S}$, $\CA_0\in\frak{X}$ such that $S\in\frak{t},A\in\frak{y}$ for all $\frak{t}\in\CS_0, \frak{y}\in\CA_0$. Obviously then,
 \[\bv_{\frak{t}\in\CS_0\atop \frak{y}\in\CA_0}\bw_{T\in\frak{t}\atop B\in\frak{y}}\bv_{\tau\in T\atop y\in B}\tau(y)\leq\bv_{\sigma\in S\atop x\in A}\sigma(x).\]
Consequently, for all $\frak{S}\in\sU\sU(\sV^X),\;\frx\in\sU X$, putting $\frak{s}=\Sigma_{\sV^X}(\frak{S})$ one obtains  
\begin{align*}
((\Sigma_X)_!\cdot \beta_{\sU X}\cdot\sU\beta_X(\frak{S}))(\frak{x})
&=\bv_{\frak{X}\in\sU\sU X,\;\Sigma_X(\frak{X})=\frak{x}}(\beta_{\sU X}\cdot\sU\beta_X(\frak{S}))(\frak{X})\\
&=\bv_{\Sigma_X(\frak{X})=\frak{x}}\bw_{F\in\beta_X[\frak{S}]\atop\CA\in\frak{X}}\bv_{\varphi\in F\atop \frak{y}\in\CA}\varphi(\frak{y})\\
&=\bv_{\Sigma_X(\frak{X})=\frak{x}}\bw_{\CS\in\frak{S}\atop \CA\in\frak{X}}\bv_{\frak{t}\in\CS\atop \frak{y}\in\CA}\beta_X(\frak{t})(\frak{y})\\
&=\bv_{\Sigma_X(\frak{X})=\frak{x}}\bw_{\CS\in\frak{S}\atop \CA\in\frak{X}}\bv_{\frak{s}\in\CS\atop \frak{y}\in\CA}\bw_{T\in\frak{s}\atop B\in\frak{y}}\bv_{\tau\in T\atop y\in B}\tau(y)\\
&\leq\bw_{S\in\frak{s}\atop A\in\frx}\bv_{\sigma\in S\atop x\in A}\sigma(x)\\
&=\beta_X\cdot\Sigma_{\sV^X}(\frak{S})(\frak{a}).\\
\end{align*}

(f) For $g,h: Z\to\sV^X$ with $g\leq h$ and all $\frak{z}\in\sU Z,\frak{x}\in\sU X$, one has
\[(\beta_X\cdot\sU g(\frak{z}))(\frx)=\bw_{C\in\frak{z}\atop A\in\frx}\bv_{z\in C\atop x\in A}(gz)(x)\leq\bw_{C\in\frak{z}\atop A\in\frx}\bv_{z\in C\atop x\in A}(hz)(x)=(\beta_X\cdot\sU h(\frak{z}))(\frx).\]
\end{proof}

\begin{rem}\label{Barr extension}
The lax extension $\overline{\sU}:\sV\text{-}{\bf Rel}\to\sV\text{-}{\bf Rel}$ of $\bbU$ corresponding to $\beta$ 
(see Remark \ref{lax ext}) is given by
\[\overline{\sU}r(\frx,\fry)=\bw_{A\in\frx,B\in\fry}\bv_{x\in A,y\in B}r(x,y),\]
for all $r:X\nrightarrow Y,\;\frx\in\sU X,\fry\in\sU Y.$
\end{rem}

A straightforward calculation gives a description of lax $(\beta,\sV)$-algebras (with $\beta$ as in Proposition \ref{Ufilterdistributes}), in analogy to the description of $(\alpha,\sV)$-algebras of Lemma \ref{closureptw}:.

\begin{lem}\label{beta algebras}
A map $\ell:\sU X\to\sV^X$ makes $(X,\ell)$ a lax $(\beta,\sV)$-algebra if and, only if, $\ell$ satisfies

\begin{enumerate}
\item[{\rm (R'')}] $\forall x\in X:\quad\quad\quad\quad\quad\quad\quad\quad\sk\leq \ell(\dot{x})(x)$,
\item[{\rm (T'')}] $\forall \frak{X}\in\sU\sU X,\frak{y}\in\sU X,z\in X:\Big(\bw_{\mathcal{A}\in\frak{X}, B\in\frak{y}}\limits\bv_{\frak{x}\in\mathcal{A}, y\in B}\limits(\ell\frak{x})(y)\Big)\otimes(\ell \frak{y})(z)\leq\ell (\Sigma_X\frak{X})(z)$.
\end{enumerate}
A map $f:X\to Y$ is a lax homomorphism $f:(X,\ell)\to(Y,\ell')$ of lax $(\beta,\sV)$-algebras if, and only if,
\begin{enumerate}
\item[\rm{(M'')}] $\forall \frak{x}\in\sU X,y\in X:\quad\quad\quad\quad\;(\ell\frak{x})(y)\leq(\ell' f[\frak{x}])(fy)$.
\end{enumerate}

\end{lem}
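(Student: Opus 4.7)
The plan is to simply unpack the general lax $(\lambda,\sV)$-algebra axioms (R), (T) and the homomorphism axiom (M) from Definition \ref{defnlaxalgebra} in the special case $\bbT=\bbU$ and $\lambda=\beta$, using the explicit formula for $\beta_X$ supplied by Proposition \ref{Ufilterdistributes}. Since $e_X=\dot{(-)}_X$, condition (R) reads pointwise as $\sy_X(x)(y)\leq\ell(\dot x)(y)$ for all $x,y\in X$; since $\sy_X(x)(y)=\sk$ when $y=x$ and $\bot$ otherwise, this is visibly equivalent to (R'').

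For (T), the main chore is evaluating the composite $\sfs_X\cdot\ell_!\cdot\beta_X\cdot\sU\ell$ at $\frak{X}\in\sU\sU X$ and $z\in X$. Unwinding $\sfs_X$ first yields a supremum over $\sigma\in\sV^X$ of terms $(\ell_!\cdot\beta_X\cdot\sU\ell(\frak{X}))(\sigma)\otimes\sigma(z)$; after applying the definition of $\ell_!$ this collapses to
\[
\bv_{\fry\in\sU X}\beta_X(\ell[\frak{X}])(\fry)\otimes\ell(\fry)(z).
\]
Substituting the formula for $\beta_X$ from Proposition \ref{Ufilterdistributes} and noting that the typical basic set of $\ell[\frak{X}]$ has the form $\{\ell(\frx)\mid\frx\in\CA\}$ for $\CA\in\frak{X}$, this inner expression becomes exactly $\bw_{\CA\in\frak{X},B\in\fry}\bv_{\frx\in\CA,y\in B}(\ell\frx)(y)$. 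Since $m_X=\Sigma_X$, condition (T) becomes the inequality in (T'') universally quantified over $\fry$; the quantification over $\fry$ on the left is absorbed by the universal quantification on the right, giving the stated equivalence.

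For (M), the inequality $f_!\cdot\ell\leq\ell'\cdot\sU f$ evaluated at $\frx\in\sU X$ and $y'\in Y$ reads $\bv_{y\in f^{-1}y'}(\ell\frx)(y)\leq(\ell'f[\frx])(y')$, which (for every $y\in X$ with $y'=fy$) is exactly (M''); conversely (M'') implies the bound on the supremum. The only subtlety worth verifying once is the identification of a basic set of $\ell[\frak{X}]\in\sU(\sV^X)$ with $\{\ell\frx:\frx\in\CA\}$ for $\CA\in\frak{X}$, which is immediate from the definition of the pushforward ultrafilter; beyond this, every step is a bookkeeping manipulation of infima and suprema, so no genuine obstacle arises.
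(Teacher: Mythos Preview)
Your proposal is correct and follows exactly the approach the paper indicates: the paper itself offers no detailed proof, stating only that ``a straightforward calculation gives a description of lax $(\beta,\sV)$-algebras \ldots\ in analogy to the description of $(\alpha,\sV)$-algebras of Lemma~\ref{closureptw}.'' Your write-up supplies precisely those omitted details, unwinding (R), (T), (M) via the explicit formulae for $\sy_X$, $\sfs_X$, $f_!$, and $\beta_X$; the one point you flag---that the infimum over $S\in\ell[\frak{X}]$ may be replaced by the infimum over the base $\{\ell(\CA):\CA\in\frak{X}\}$ because $\bigvee_{\sigma\in S,y\in B}\sigma(y)$ is monotone in $S$---is indeed the only step requiring a moment's thought, and you handle it correctly.
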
 

%\begin{proof} Both $(R)\iff(R'')$ and $(M)\iff(M'')$ are trivial.

%$(T)\iff(T'')$: It follows from that for all $\frak{X}\in\bbU\bbU X$ and all $x\in X$,
%\begin{align*}
%\sfs_X\cdot\ell_!\cdot\beta_X\cdot\bbU\ell(\frak{X})(x)
%&=\bv_{\mu\in\sV^X}(\ell_!\cdot\beta_X\cdot\bbU\ell(\frak{X})(\mu))\otimes\mu(x)\\
%&=\bv_{\mu\in\sV^X}\bv_{\ell(\frak{x})=\mu}(\beta_X\cdot\bbU\ell(\frak{X})(\frak{x}))\otimes(\ell(\frak{x})(x))\\
%&=\bv_{\frak{x}\in\bbU X}\Big(\bw_{\CA\in\frak{X}\atop A\in\frak{x}}
%\bv_{\frak{y}\in\CA\atop y\in A}\ell(\frak{y})(y)\Big)\otimes\ell(\frak{x})(x)
%\end{align*}
%\end{proof}

%A map $f:(X,\delta)\lra(Y,\delta)$ between$\mathcal{V}$-approach spaces is non-expansive if \[\delta(A,x)\leq \delta'(f(A),f(x))\] for all $A\subseteq X, x\in X$. $\mathcal{V}$-approach spaces and non-expansive maps form a category $\mathcal{V}$-$\mathsf{App}$.

%\begin{defn}\label{uv-cat}\cite{MonTop} A $(\mathbb{U},\mathcal{V})$-category is a pair $(X,r)$ consisting of a set $X$ and a $\CV$-relation $r:\mathbb{U}X\times X\lra\mathcal{V}$ satisfying (1) $r(\dot{x},x)\geq k$ and (2) $\mathbb{U}r(\mathfrak{X},\mathfrak{x})\otimes r(\mathfrak{x},x)\leq r(m_X\mathfrak{X},x)$ for all $\mathfrak{X}\in\mathbb{U}^2X,\mathfrak{x}\in\mathbb{U}X,x\in X$.
%\end{defn}
%A map $f:(X,r)\lra (Y,r')$ between $(\mathbb{U},\mathcal{V})$-categories is a $(\bbU,\CV)$-functor if $r(\frx,x)\leq r'(f(\frx),f(x))$ for all $\frx\in\bbU X,x\in X$. $(\bbU,\CV)$-categories and $(\bbU,\CV)$-functors form a category $(\bbU,\CV)$-$\mathsf{Cat}$.
When displayed in terms of the $\sV$-relation $a:\sU X\nrightarrow X$ with $a(\frx,y)=(\ell\frx)(y)$, (see Remark \ref{TVCat}), conditions (R"), (T") read as
\[\sk\leq a(\dot{x},x)\quad\text{    and   }\quad\overline{\sU}a(\frak{X},\fry)\otimes a(\fry, z)\leq a(\Sigma\frak{X},z),\quad\quad\quad(*)\]
for all $x,z\in X, \fry \in \sU X, \frak{X}\in\sU\sU X$.

Next we will establish an adjunction between the categories $(\alpha,\sV)\text{-}{\bf Alg}\cong(\bbP,\sV,\hat{\sP})\text{-}{\bf Cat}\cong \sV\text{-}{\bf Cls}$ and 
$(\beta,\sV)\text{-}{\bf Alg}\cong (\bbU,\sV,\overline{\sU})\text{-}{\bf Cat}$, the restriction of which will then give an isomorphism $$\sV\text{-}{\bf App}\cong (\beta,\sV)\text{-}{\bf Alg}.$$

First recall from \cite{LaxDistLaws} that, given lax extensions $\hat{S},\hat{T}$ of $\bf Set$-monads $\bbS, \bbT$ to
$\sV\text{-}{\bf Rel}$,
an {\em algebraic morphism} $h:(\bbS,\hat{S})\to({\mathbb T},\hat{T})$ is a family of $\sV$-relations $h_X:SX\nrightarrow TX\;(X\in {\bf Set})$,
satisfying the following conditions for all $f:X\to Y\text{ in } {\bf Set}\text{   and  }r:X\nrightarrow Y,\;a:TX\nrightarrow X \text{ in }\sV\text{-}{\bf Rel}$:

 \begin{tabular}{llr}
 &&\\
 a. & $Tf\circ h_X\leq  h_Y\circ Sf$, &\quad\quad\quad\quad
 (lax naturality)\\
 b. & $e_X\leq h_X\circ d_X$,  &\quad (lax unit law)\\
 c. & $m_X\circ h_{TX}\circ\hat{S}h_X\leq h_X\circ n_X$, &\quad\quad\quad\quad\quad\quad\quad\; (lax multiplication law)\\
 d. & $h_Y\circ\hat{S}r\leq \hat{T}r\circ h_X$, &\quad (lax compatability)\\
 e. & $\hat{S}(a\circ h_X)\leq\hat{S}a\circ\hat{S}h_X$. &\quad (strictness of $\hat{S}$ at $h$)\\
 &&\\
 \end{tabular}
 
 Here, for $s:Y\nrightarrow Z$, the composite $s\circ r:X\nrightarrow Z$ in $\sV\text{-}{\bf Rel}$ is given by $(s\circ r)(x,z)=\bv_{y\in Y}s(y,z)\otimes r(x,y)$, and (as in \cite{MonTop}) we identify a map $f:X\to Y$ with its $\sV$-graph $f_{\circ}:X\nrightarrow Y$, given by $f_{\circ}(x,y)=\sk$ if $fx=y$, and $\bot$ otherwise. Now, such lax transformation $h:\hat{S}\to\hat{T}$ induces the {\em algebraic functor}
 \[A_h:(\bbT,\sV,\hat{T})\text{-}{\bf Cat}\to(\bbS,\sV,\hat{S})\text{-}{\bf Cat},\; (X,a)\mapsto (X,a\circ h_X).\]
Considering ${\mathbb S}={\mathbb P},{\mathbb T}={\mathbb U}$, let us consider $\varepsilon_X:\sP X\nrightarrow\sU X$
by
\[\varepsilon_X(A,\frx)=\left\{
\begin{array}{ll}
\sk & \text{if }A\in\frx\\
\bot & \text{otherwise}
\end{array}
\right\},\]
for all $A\subseteq X, \frx\in \sU X$.
\begin{prop}
$\varepsilon:(\bbP,\hat{\sP})\to(\bbU,\overline{\sU})$ is an algebraic morphism and, hence, induces the algebraic functor
\[A_{\varepsilon}:(\bbU,\sV,\overline{\sU})\text{-}{\bf Cat}\to(\bbP,\sV,\hat{\sP})\text{-}{\bf Cat},\quad(X,a)\mapsto (X,a\circ\varepsilon_X).\]
\end{prop}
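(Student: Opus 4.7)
My plan is to verify each of the five defining conditions (a)--(e) of an algebraic morphism for $\varepsilon$, all of which unravel cleanly once one observes that $\varepsilon_X(A,\frx)=\sk$ iff $A\in\frx$.

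I would dispatch the three most routine conditions first. Lax naturality (a) reduces to the observation that $A\in\frx$ and $f[\frx]=\fry$ together imply $f(A)\in\fry$; the lax unit law (b) reduces to $\{x\}\in\dot x$; and strictness (e) is witnessed by the single family $\CF_{\CA}:=\{\frx\in\sU X:\exists\,A\in\CA,\;A\in\frx\}$, which satisfies $\hat{\sP}\varepsilon_X(\CA,\CF_{\CA})=\sk$ and so realizes the needed lower bound for $(\hat{\sP}a\circ\hat{\sP}\varepsilon_X)(\CA,B)$.

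The multiplication law (c) unravels to the assertion: whenever $\frak X\in\sU\sU X$ satisfies $\Sigma_X\frak X=\frx$ and some $\CF\in\frak X$ has the property that every $\frx'\in\CF$ contains some $A\in\CA$, then $\bigcup\CA\in\frx$. This follows directly from the monad structure of $\sU$, since monotonicity of ultrafilters forces $\bigcup\CA\in\frx'$ for each $\frx'\in\CF$, whence $\{\frx':\bigcup\CA\in\frx'\}\supseteq\CF$ lies in $\frak X$, giving $\bigcup\CA\in\Sigma_X\frak X=\frx$.

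The main obstacle is lax compatibility (d), $\varepsilon_Y\circ\hat{\sP}r\le\overline{\sU}r\circ\varepsilon_X$ for $r:X\nrightarrow Y$. Elementwise this reads
\[\bv_{B\in\fry}\bw_{y\in B}\bv_{x\in A}r(x,y)\;\le\;\bv_{\frx\ni A}\bw_{A'\in\frx,\,B'\in\fry}\bv_{x\in A',\,y\in B'}r(x,y),\]
and requires us to manufacture a suitable ultrafilter $\frx\ni A$. My plan is to exploit complete distributivity of $\sV$ in two stages: given $v'\ll v$ with $v$ totally below the left-hand side, one extracts a single $B\in\fry$ with $v\le\bw_{y\in B}\bv_{x\in A}r(x,y)$; then the chained relation $v'\ll v\le\bv_{x\in A}r(x,y)$ yields $v'\ll\bv_{x\in A}r(x,y)$ for every $y\in B$, so one may choose $x_y\in A$ with $v'\le r(x_y,y)$. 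The crux is the resulting construction: the sets $X_{B'}:=\{x_y:y\in B\cap B'\}$ for $B'\in\fry$ satisfy $X_{B'_1\cap B'_2}\subseteq X_{B'_1}\cap X_{B'_2}$ and so generate a filter base, which together with $\{A\}$ extends, by the ultrafilter principle, to an ultrafilter $\frx$ containing $A$ and every $X_{B'}$. For this $\frx$, each $A'\in\frx$ meets every $X_{B'}$, yielding an element $x=x_y\in A'$ with $y\in B'$ and $r(x,y)\ge v'$, which bounds the contribution of $\frx$ to the right-hand side from below by $v'$. Two applications of the ccd identity $v=\bv\{v'\ll v\}$ then assemble the desired inequality. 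Once (a)--(e) are in hand, the induced algebraic functor $A_{\varepsilon}$ arises by the general construction recalled from \cite{LaxDistLaws}.
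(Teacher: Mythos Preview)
Your proposal is correct and follows essentially the same route as the paper's proof: conditions (a)--(c) are handled identically, and for (d) your ultrafilter construction via the sets $X_{B'}=\{x_y:y\in B\cap B'\}$ coincides with the paper's pushforward $f(B\cap B')$ along the choice function $y\mapsto x_y$. Two minor differences are worth noting: for (d) the paper fixes $B\in\fry$ first and uses a single $u\ll\bw_{y\in B}\bv_{x\in A}r(x,y)$ (which already gives $u\ll\bv_{x\in A}r(x,y)$ for each $y$), whereas you nest two totally-below relations---both work, yours is just slightly less economical; for (e) your single witness $\CF_{\CA}$ is in fact cleaner than the paper's argument, since $\hat{\sP}a(\CF_{\CA},B)$ equals $\hat{\sP}(a\circ\varepsilon_X)(\CA,B)$ on the nose and no appeal to complete distributivity is needed.
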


\begin{proof}
We verify conditions a--e above.

a. Trivially, if $A\in\frx\in\sU X$ and $f[\frx]=\fry$, then $f(A)\in\fry$, and $(\sU f\circ\varepsilon_X)(A,\fry)\leq (\varepsilon_Y\circ\sP f)(A,\fry)$ follows. 

b. Likewise, if $\frx=\dot{x}$, then $\{x\}\in\frx$, and $(\dot{\text{-}})_X(x,\frx)\leq\varepsilon_X\circ\{\text{-}\}_X(x,\frx)$ follows.

c. For $\frak{X}\in\sU\sU X, \CA\subseteq\sP X$ one has
\[(\varepsilon_{\sU X}\circ\hat{\sP}\varepsilon_X)(\CA,\frak{X})
=\bv_{\CB\subseteq \sU X}\hat{\sP}\varepsilon_X(\CA,\CB)\otimes\varepsilon_{\sU X}(\CB,\frak{X})
=\bv_{\CB\in\frak{X}}\hat{\sP}\varepsilon_X(\CA,\CB)=\bv_{\CB\in\frak{X}}\bw_{\fry \in\CB}\bv_{A\in\CA}\varepsilon(A,\fry)=\sk\]
in the case that, for all $\fry\in\CB$, there is $A\in\CA$ with $A\in\fry$, and $\bot$ otherwise. So, in the former case, given any $\CB\in\frak{X}$, one has $\CB\subseteq\{\fry\in\sU X\;|\;\bigcup\CA\in\fry\}\in\frak{X}$ and, hence, $\bigcup\CA\in\frx:=\Sigma\frak{X}.$ Consequently,
$(\Sigma_X\circ\varepsilon_{\sU X}\circ\hat{\sP}\varepsilon_X)(\CA,\frx)\leq(\varepsilon_X\circ\bigcup_X)(\CA,\frx).$

d. For $r:X\nrightarrow Y, A\subseteq X,\fry\in\sU Y$, we must compare
\[(\varepsilon_Y\circ\hat{\sP}r)(A,\fry)=\bv_{B\subseteq Y}\varepsilon_Y(B,\fry)\otimes\hat{P}r(A,B)
=\bv_{B\in\fry}\bw_{y\in B}\bv_{x\in A}r(x,y) \]
with
\[(\overline{\sU}r\circ\varepsilon_X)(A,\fry)=\bv_{\frx\in \sU X}\overline{\sU}r(\frx,\fry)\otimes\varepsilon_X(A,\frx)               
=\bv_{\frx\in\sU X\atop\frx\ni A}\bw_{A'\in\frx\atop B'\in\fry}\bv_{x'\in A'\atop y'\in B'}r(x',y').    \]
So, given $B\in\fry$, we consider $u\ll \bw_{y\in B}\bv_{x\in A}r(x,y)$ in $\sV$.
For all $y\in B$ we may then pick $fy\in A$ with $u\leq r(fy,y)$. With the map $f:B\to A$ we choose an ultrafilter $\frx$
on $X$ that contains all the sets $f(C),\;C\in\fry,C\subseteq B.$ For all such $C$ and any $B'\in\fry$, 
since $C\cap B'\neq\emptyset$, we finally obtain some $y'\in B'$ with $u\leq r(fy',y')$. Now $(\varepsilon_Y\circ\hat{\sP}r)(A,\fry)
\leq (\overline{\sU}r\circ\varepsilon_X)(A,\fry)$ follows.

e. For $\CA\subseteq\sP X, B\subseteq X$ one has
\[\hat{\sP}(a\circ\varepsilon_X)(\CA,B)
=\bw_{y\in B}\bv_{A\in \CA}\bv_{\frx\in\sU X}a(\frx,y)\otimes\varepsilon_X(A,\frx)
=\bw_{y\in B}\bv_{A\in \CA}\bv_{\frx\ni A}a(\frx,y),\]
while
\[(\hat{\sP}a\circ\hat{\sP}\varepsilon_X)(\CA,B)=\bv_{\CB\subseteq \sU X}\hat{\sP}a(\CB,B)\otimes\hat{\sP}\varepsilon_X(\CA,\CB)
=\bv_{\CB\subseteq\sU X}\Big(\bw_{y\in B}\bv_{\fry\in\CB}a(\fry,y)\Big)\otimes\Big(\bw_{\fry'\in\CB}\bv_{a\in\CA}\varepsilon_X(A,\fry')\Big).\]
Now, whenever $u\ll \hat{\sP}(a\circ\varepsilon_X)(\CA,B)$ in $\sV$, for all $y\in B$ one obtains 
$\frx_y\in  A_y\in\CA$ with $u\leq a(\frx_y,y)$. Putting $\CB:=\{\frx_y\;|\;y\in B\}$ one sees $u\leq (\hat{\sP}a\circ\hat{\sP}\varepsilon_X)(\CA,B)$, which gives the needed inequality.
\end{proof}

\begin{rem}\label{rightadjoint}
In the quantaloid (see \cite{Stubbe2005}) $\sV\text{-}{\bf Rel}$,  the sup-map 
\[\sV\text{-}{\bf Rel}(\sU X, X)\to\sV\text{-}{\bf Rel}(\sP X, X),\; a\mapsto a\circ\varepsilon_X,\]
has a right adjoint, which assigns to $\delta:\sP X\nrightarrow X$ the $\sV$-relation $\delta\swa\varepsilon_X:\sU X\nrightarrow X$, given by
\[(\delta\swa\varepsilon_X)(\frx,x)=\bw_{A\subseteq X}(\delta(A,x)\swa\varepsilon_X(A,\frx))=\bw_{A\in\frx}\delta(A,x).\]
Writing $(cA)(x)$ for $\delta(A,x)$ we will take advantage of this obvious fact in the proof of the Theorem below.
\end{rem}

If we describe $(\bbP,\sP)$-categories as $\sV$-closure spaces and $(\bbU,\sV)$-categories as lax $(\beta,\sV)$-algebras then, $A_{\varepsilon}$ takes the form
\[A_{\varepsilon}: (\beta,\sV)\text{-}{\bf Alg}\to\sV\text{-}{\bf Cls},\quad(X,\ell)\mapsto(X,c_{\ell}:\sP X\to\sV^X), \;(c_{\ell}A)(x)=\bv_{\frx\in\sU X, \frx\ni A}(\ell\frx)(x).\]
We are now ready to prove the main result of this paper:

\begin{thm}\label{mainthm}
For a completely distributive quantale $\sV$, the algebraic functor $A_{\varepsilon}$ embeds $(\beta,\sV)\text{-}{\bf Alg}$ into $\sV\text{-}{\bf Cls}$ as a full coreflective subcategory,
 which is precisely the category $\sV\text{-}{\bf App}$ of $\sV$-approach spaces.
\end{thm}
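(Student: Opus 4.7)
The strategy is to exhibit a right adjoint $R\colon\sV\text{-}{\bf Cls}\to(\beta,\sV)\text{-}{\bf Alg}$ to $A_{\varepsilon}$, show that its unit is an isomorphism (so $A_{\varepsilon}$ is fully faithful), and then identify the essential image of $A_{\varepsilon}$ with $\sV\text{-}{\bf App}$. Guided by Remark~\ref{rightadjoint}, I set $R(X,c):=(X,\ell_c)$ with
$$(\ell_c\frx)(x):=\bw_{A\in\frx}(cA)(x).$$
The first task is to verify that $\ell_c$ is a lax $(\beta,\sV)$-algebra structure. Axiom (R'') is immediate from (C0) and (R'), since $\{x\}\subseteq A$ for every $A\in\dot{x}$. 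For (T''), take $u\ll\overline{\sU}\ell_c(\frX,\fry)$, $v\ll\ell_c(\fry,z)$, and $A\in\Sigma_X\frX$; then $\CA_A:=\{\frx\,|\,A\in\frx\}\in\frX$, and specializing the defining inequality for $u$ at $\CA=\CA_A$ produces, for every $B\in\fry$, some $y\in B$ with $u\leq(cA)(y)$. Hence $S:=\{y\in X\,|\,u\leq(cA)(y)\}$ meets every member of $\fry$ and therefore belongs to $\fry$. Applying (T') of $c$ to $\CA=\{A\}$ and $B:=S\cap B_0$ for any $B_0\in\fry$ gives $u\otimes v\leq\bw_{y\in B}(cA)(y)\otimes(cB)(z)\leq(cA)(z)$; taking joins in $u,v$ and the meet over $A\in\Sigma_X\frX$ delivers (T''). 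Functoriality of $R$ on contractive maps follows by taking $A:=f^{-1}B$ and using (C0) together with $f(f^{-1}B)\subseteq B$.

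The adjunction $A_{\varepsilon}\dashv R$ is established by matching morphism conditions: for $f\colon X\to Y$, contractivity of $f\colon(X,c_\ell)\to(Y,d)$ says $(\ell\frx)(x)\leq(df(A))(fx)$ for all $A\in\frx$, while the lax-homomorphism condition $f\colon(X,\ell)\to(Y,\ell_d)$ reads $(\ell\frx)(x)\leq(dB)(fx)$ for all $B$ with $f^{-1}B\in\frx$; these coincide under the mutual substitutions $B\leftrightarrow f(A)$, $A\leftrightarrow f^{-1}B$ and (C0). To see $A_{\varepsilon}$ is fully faithful, I show that the unit $(X,\ell)\to RA_{\varepsilon}(X,\ell)=(X,\ell_{c_\ell})$ is an isomorphism, i.e., $\ell=\ell_{c_\ell}$. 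The inequality $\ell\leq\ell_{c_\ell}$ is immediate (take $\fry=\frx$ in the inner supremum). For the converse, given $u\ll\ell_{c_\ell}(\frx,x)$, for each $A\in\frx$ there is $\fry_A\ni A$ with $u\leq(\ell\fry_A)(x)$. The family $\{\CV\}\cup\{\CU_A\,|\,A\in\frx\}$ with $\CV:=\{\fry\,|\,(\ell\fry)(x)\geq u\}$ and $\CU_A:=\{\fry\,|\,A\in\fry\}$ has the finite intersection property; any ultrafilter $\frX$ on $\sU X$ extending it satisfies $\Sigma_X\frX=\frx$ and $\overline{\sU}\ell(\frX,\dot{x})\geq u$, and then (T'') and (R'') of $\ell$ yield $u\leq(\ell\frx)(x)$.

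It remains to identify the essential image of $A_{\varepsilon}$ as $\sV\text{-}{\bf App}$. In one direction: $(c_\ell\emptyset)(x)=\bot$ since no ultrafilter contains $\emptyset$, and the equivalence $A\cup B\in\frx\iff A\in\frx\text{ or }B\in\frx$ gives $c_\ell(A\cup B)=c_\ell A\vee c_\ell B$, so $c_\ell$ is always a $\sV$-approach structure. Conversely, given a $\sV$-approach space $(X,c)$, I show $c=c_{\ell_c}$; only $c\leq c_{\ell_c}$ is nontrivial, and by Proposition~\ref{coprime} it suffices to show that every coprime $p\leq(cA)(x)$ satisfies $p\leq c_{\ell_c}(A)(x)$. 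The set $\CN_p:=\{B\subseteq X\,|\,p\not\leq(cB)(x)\}$ is downward closed by (C0), and closed under finite unions because coprimality of $p$ and (C5') combine to force $p\leq c(B_1\cup B_2)(x)=(cB_1)(x)\vee(cB_2)(x)$ to imply $p\leq(cB_i)(x)$ for some $i$. Since $A\notin\CN_p$, the family $\{A\}\cup\{X\setminus N\,|\,N\in\CN_p\}$ has the finite intersection property and extends to an ultrafilter $\frx\ni A$ avoiding $\CN_p$, giving $p\leq\bw_{B\in\frx}(cB)(x)=(\ell_c\frx)(x)\leq c_{\ell_c}(A)(x)$.

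The principal obstacles are the two Axiom-of-Choice ultrafilter constructions — the unit-iso argument in the second paragraph and the approach-implies-image argument in the third — each of which exploits complete distributivity of $\sV$ (via its totally-below, respectively coprime, decompositions) to turn join-type information about a single quantale element into the finite-union closure of a suitable ideal on $\sP X$ or $\sP\sU X$, at which point Zorn's Lemma produces the required ultrafilter extension.
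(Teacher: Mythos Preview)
Your proposal is correct and follows essentially the same route as the paper: the same right adjoint $R(X,c)=(X,\ell_c)$ with $(\ell_c\frx)(x)=\bw_{A\in\frx}(cA)(x)$, the same two ultrafilter constructions (one on $\sU X$ to prove $\ell_{c_\ell}=\ell$, one on $X$ via the coprime-directed ideal to prove $c_{\ell_c}=c$ for approach structures), and the same matching of morphism conditions for the adjunction. The only substantive variation is in checking (T'') for $\ell_c$: you apply (T') of Lemma~\ref{closureptw} directly to $\CA=\{A\}$ and $B=S\cap B_0$, whereas the paper passes through the closure-tower reformulation (C3') of Remark~\ref{bottom}(2), using $c^uA\in\fry$ and $u\otimes c(c^uA)(z)\leq(cA)(z)$; these are equivalent by Proposition~\ref{closuretowers}. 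A minor labeling slip: what you invoke as ``(C5')'' in the last paragraph is really the additivity condition $c(A\cup B)=cA\vee cB$ from the definition of a $\sV$-approach space, not the closure-tower condition of Theorem~\ref{coprimeapproach} --- but the argument you give is exactly the right one.
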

 
\begin{proof}
That $A_{\varepsilon}$ actually takes values in $\sV\text{-}{\bf App}$ is just a reflection of the fact that, for any ultrafilter $\frx$ on a set $X$, one has $\emptyset\notin \frx$, and $A\cup B\in\frx$ only if $A\in\frx$ or $B\in\frx$.
Next we prove that $A_{\varepsilon}$ has a right adjoint, described by (see Remark \ref{rightadjoint})
\[R:\sV\text{-}{\bf Cls}\to (\beta,\sV)\text{-}{\bf Alg},\quad (X,c)\mapsto (X,\ell_c:\sU X\to\sV^X),
\;(\ell_c\frx)(x)=\bw_{A\in\frx}(cA)(x).\]
Given $(X,c)\in\sV\text{-}{\bf Cls}$ we must first show $(X,\ell_c)\in (\beta,\sV)\text{-}{\bf Alg}$, that is: writing $a(\frx,y)$ for $(\ell_c\frx)(y)$, we must establish $(*)$ of Lemma \ref{beta algebras}. Trivially, 
$\sk\leq\bv_{A\in\dot{x}}(cA)(x)=a(\dot{x},x)$ for all $x\in X$. Our strategy to show 
\[\overline{\sU}a(\frak{X},\fry)\otimes a(\fry,z)\leq a(\Sigma\frak{X},z)=\bw_{A\in\Sigma\frak{X}}(cA)(z)\] 
for all $\frak{X}\in\sU\sU X, \fry\in\sU X, z\in X,$ is to consider any $A\in\Sigma\frak{X}$ and 
\[u\ll\overline{\sU}a(\frak{X},\fry)=
\bw_{\CA\in\frak{X}\atop B\in\fry}\bv_{\frx\in\CA\atop y\in B}a(\frx,y)\]
in $\sV$ and to show
$u\otimes a(\fry,z)\leq (cA)(z).$ Indeed, with $A\in\Sigma\frak{X}$ one has $\CA:=\{\frx\in\sU X\;|\;A\in\frx\}\in\frak{X}$. 
Then, putting $C:=\{y\in X\;|\;\exists \frx\in\CA :u\leq a(\frx,y)\}$, since $a(\frx,y)\leq(cA)(y)$ whenever $A\in\frx$, we obtain $C\subseteq c^uA=\{y\in X\;|\;(cA)(y)\geq u\}$. Since $u\ll \bw_{B\in\fry}\bv_{\frx\in\CA, y\in B}a(\frx,y)$,
so that for all $B\in\fry$ there is $\frx\in\CA$ with $u\leq a(\frx,y)$, we see that $B\cap C\neq\emptyset$ whenever $B\in
\fry$. Maximality of $\fry$ therefore forces $C\in\fry$, and then $c^uA\in\fry$. With (C3') of Remark \ref{bottom} we conclude
\[u\otimes a(\fry,z)=u\otimes\bw_{B\in\fry}(cB)(z)\leq u\otimes \,c(c^uA)(z)\leq(cA)(z).\]
For the adjunction $A_{\varepsilon}\dashv R$, it now suffices to show that, given a $\sV$-closure space $(X,c)$ and a $(\beta,\sV)$-algebra $(Y,\ell)$, a map $f:Y\to X$ is a morphism $A_{\varepsilon}(Y,\ell)\to (X,c)$ in 
$\sV\text{-}{\bf Cls}$ if, and only if, it is a morphism $(Y,\ell)\to R(X,c)$ in $(\beta,\sV)\text{-}{\bf Alg}$.
This, however, is obvious, since either statement means equivalently
\[\forall B\in\fry\in\sU Y,\,y\in Y: (\ell\fry)(y)\leq c(f(B))(fy).\]

Next, for a $\sV$-approach space $(X,c)$, we must show $A_{\varepsilon}R(X,c)=(X,c)$, that is: $c_{\ell_c}=c$. Since the adjunction gives $c_{\ell_c}\leq c$, it suffices to show $``\geq"$, that is:
for all $A\subseteq X, x\in X$, 
\[(cA)(x)\leq\bv_{\frx\in\sU X\atop\frx\ni A}\bw_{B\in\frx}(cB)(x),\] 
and for that, by Proposition \ref{coprime}, it suffices to check that every coprime element $p$ in $\sV$ with $p\leq (cA)(x)$ satisfies
$p\leq\bv_{\frx\ni A}\bw_{B\in\frx}(cB)(x).$ 
But the set $\mathcal{I}_p=\{B\subseteq X:\delta(B,x)\not\geq p\}\subseteq PX$ is directed since $p$ is coprime, and $\mathcal{I}_p$ is disjoint from the filter $\{B\subseteq X:A\subseteq B\}$.  
There is therefore an ultrafilter $\frx_p$ with $A\in\frx_p$ disjoint from $\mathcal{I}_p$. Thus, for all $B\in\frx_p$, $(cB)(x)\geq p$ and, consequently, 
\[p\leq\bw_{B\in\frx_p}(cB)(x)\leq\bv_{\frx\ni A}\bw_{B\in\frx}(cB)(x).\]

Finally we show $RA_{\varepsilon}(X,\ell)=(X,\ell)$ for every $(X,\ell)\in(\beta,\sV)\text{-}{\bf Alg}$, that is: 
$\ell_{c_{\ell}}=\ell$. As the adjunction gives $``\geq"$, we need to show only $\ell_{c_{\ell}}\leq\ell$.
Writing $a(\frx,y)$ for $(\ell\frx)(y)$, this means that, for all $\frx\in\sU X, x\in X$, we must prove
\[(\ell_{c_\ell}\frx)(x)=\bw_{A\in\frx}\bv_{\fry\in\sU X\atop\fry\ni A}a(\fry,x)\leq a(\frx,x).\]
To this end, considering any $u\ll (\ell_{c_\ell}\frx)(x)$ in $\sV$, for all $A\in\frx$ one obtains $\fry_A\in\sU X$ with $A\in\fry_A$ and $u\leq a(\fry_A,x)$. So, for all $A\in\frx$, the sets
\[\CA_A=\{\fry\in\sU X\;|\;A\in\fry, u\leq a(\fry,x\}\]
are not empty, and we can choose an ultrafilter $\frak{X}$ on $\sU X$ containing all of them. Since for every 
$A\in \frx$ one has $\{\fry\in\sU X\;|\;A\in \fry\}\supseteq \CA_A\in\frak{X}$, we obtain
$\Sigma\frak{X}=\frx$. Furthermore,
\[\overline{U} a(\frX,\dot{x})=\bw_{\mathcal{A}\in\frX\atop B\in\dot{x}}\bv_{\fry\in\mathcal{A}\atop y\in B}a(\fry,y)=\bw_{\mathcal{A}\in\frX}\bv_{\fry\in\mathcal{A}}a(\fry,x)
=\bw_{A\in\frx}\bv_{\fry\in\mathcal{A}_A}a(y,x)\geq u.\]
With the transitivity of $a$ we conclude
\[a(\frx,x)=a(\Sigma\frX,x)\geq \overline{\sU} a(\frX,\dot{x})\otimes a(\dot{x},x)\geq u\otimes\sk=u,\]
and $a(\frx,x)\geq(\ell_{c_{\ell}}\frx)(x)$ follows, as desired.
\end{proof}

The isomorphism
\[\sV\text{-}{\bf App}\cong(\bbU,\sV,\overline{\sU})\text{-}{\bf Cat}\cong(\beta,\sV)\text{-}{\bf Alg}\]
gives Barr's \cite{Barr} description of topological spaces and the Clementino-Hofmann \cite {CleHof2003} presentation of approach spaces in terms of ultrafilter convergence when one chooses $\sV={\sf 2}$ and $\sV=[0,\infty]$, respectively. For $\sV={\bf {\Delta}}$ we obtain the corresponding description of {\bf ProbApp}, as follows.
\begin{cor}\label{ProbAppUltra}
The structure of a probabilistic approach space on a set $X$ may be described equivalently as a map 
$\ell:\sU X\to {\bf{\Delta}}^X$ satisfying, for all $\frX\in\sU\sU X, \fry\in\sU X, z\in X$,
\begin{enumerate}
\item[{\rm (R")}] $\kappa\leq(\ell\frx)(x),$
\item[{\rm (T")}] $\Big(\bw_{\CA\in\frX\atop B\in\fry}\bv_{\frx\in\CA\atop\y\in B}(\ell\frx)(y)\Big)\odot(\ell\fry)(z)\leq\ell(\Sigma\frX)(z).$
\end{enumerate}
A map $f:X\to Y$ of probabilistic approach spaces $(X,\ell), (Y,\ell')$ is contractive precisely when, for all $\frx\in\sU X, x\in X$,
\begin{enumerate}
\item[{\rm (M")}] $(\ell\frx)(x)\leq(\ell'f[\frx])(fx).$
\end{enumerate}
\end{cor}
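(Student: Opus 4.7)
The plan is to obtain Corollary \ref{ProbAppUltra} as a direct specialization of the main Theorem \ref{mainthm} to the quantale $\sV={\bf{\Delta}}$. First, we have already observed that ${\bf{\Delta}}$ is constructively completely distributive and, granting the Axiom of Choice, completely distributive, so the hypotheses of Theorem \ref{mainthm} are met. Second, Corollary \ref{probapp}(1) identifies ${\bf ProbApp}$ with ${\bf{\Delta}}\text{-}{\bf App}$, via $(cA)(x) = \delta(x,A)$. Thus it suffices to translate the isomorphism
\[
{\bf{\Delta}}\text{-}{\bf App} \cong (\beta,{\bf{\Delta}})\text{-}{\bf Alg}
\]
provided by Theorem \ref{mainthm} into the pointwise language of Lemma \ref{beta algebras}, reading the quantale operation $\otimes$ as the convolution $\odot$ and the $\otimes$-unit $\sk$ as $\kappa$.

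Concretely, starting from $(X,\delta) \in {\bf ProbApp}$, we set $(cA)(x) := \delta(x,A)$, apply the right adjoint $R$ of Theorem \ref{mainthm} to obtain $\ell_c(\frx)(x) = \bw_{A\in\frx}(cA)(x)$, and note that the axioms (R), (T) for a lax $(\beta,{\bf{\Delta}})$-algebra translate verbatim to (R''), (T'') by Lemma \ref{beta algebras} once one substitutes $\odot$ for $\otimes$ and $\kappa$ for $\sk$. Conversely, starting from $\ell: \sU X \to {\bf{\Delta}}^X$ satisfying (R''), (T''), the algebraic functor $A_\varepsilon$ of Theorem \ref{mainthm} yields the probabilistic approach structure
\[
\delta(x,A) = (c_\ell A)(x) = \bv_{\frx\ni A}(\ell\frx)(x),
\]
and Theorem \ref{mainthm} guarantees that these passages are mutually inverse on objects.

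For morphisms, one applies the homomorphism description in Lemma \ref{beta algebras}: the condition (M'') for a lax $(\beta,{\bf{\Delta}})$-algebra morphism corresponds under the isomorphism to the contractivity inequality $\delta(x,A) \le \epsilon(fx, f(A))$, which is the defining condition of a morphism in ${\bf ProbApp}$ as recalled just before Corollary \ref{probapp}.

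There is no real obstacle to overcome here beyond bookkeeping; the only point that might merit an explicit remark is the verification that ${\bf{\Delta}}$ meets the complete-distributivity hypothesis of Theorem \ref{mainthm}, which has already been discussed (together with the explicit description of coprime elements of ${\bf{\Delta}}$) in the paragraph preceding Corollary \ref{probapp}. Thus the proof can be presented very compactly: a one- or two-line citation of Theorem \ref{mainthm}, Corollary \ref{probapp}(1) and Lemma \ref{beta algebras}, supplemented by the identification $\otimes = \odot$, $\sk = \kappa$ in the quantale ${\bf{\Delta}}$.
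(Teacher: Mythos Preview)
Your proposal is correct and mirrors exactly the paper's own treatment: the corollary is stated without proof, immediately after Theorem \ref{mainthm}, as the specialization of the isomorphism $\sV\text{-}{\bf App}\cong(\beta,\sV)\text{-}{\bf Alg}$ to $\sV={\bf{\Delta}}$, using the identification ${\bf ProbApp}={\bf{\Delta}}\text{-}{\bf App}$ from Corollary \ref{probapp} and the elementwise description of lax $(\beta,\sV)$-algebras from Lemma \ref{beta algebras}. Your added bookkeeping (the explicit formulas for $R$ and $A_{\varepsilon}$ and the remark on complete distributivity of ${\bf{\Delta}}$) is more than the paper supplies but entirely in line with its intended argument.
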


\section{Change-of-base functors}

For a monad $\bbT=(T,m,e)$ and a quantale $\sV$, let us call a set $X$ equipped with a map $c:TX\to\sV^X$ a 
$(\bbT,\sV)${\em -graph}. With a morphism $f:(X,c)\to(Y,d)$ required to satisfy (M) of Definition \ref{defnlaxalgebra}, we obtain the category $(\bbT,\sV)\text{-}{\bf Gph}$, 
which contains $(\lambda,\sV)\text{-}{\bf Alg}$ as a full subcategory, for any 
lax distributive law $\lambda$ of $\bbT$ over $\bbP_{\sV}$. For a monotone map $\varphi:\sV\to\sW$ one has the 
{\em change-of-base functor}
\[B_{\varphi}:(\bbT,\sV)\text{-}{\bf Gph}\to(\bbT,\sW)\text{-}{\bf Gph}, \;(X,c)\mapsto (X,\varphi^X\cdot c),\]
with $\varphi^X:\sV^X\to\sW^X,\;\sigma\mapsto \varphi\cdot\sigma.$ Since $(\bbT,\sV)$-graphs actually neither refer to the monad structure of the functor $T$ nor to the quantalic structure of the lattice 
$\sV$, they behave well under any adjunction of monotone maps:

\begin{lem}\label{graphadjunction}
If $\varphi\dashv\psi:\sW\to\sV$, then $B_{\varphi}\dashv B_{\psi}:(\bbT,\sW)\text{-}{\bf Gph}\to(\bbT,\sV)\text{-}{\bf Gph}$.
\end{lem}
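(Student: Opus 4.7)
The plan is to reduce the adjunction $B_\varphi \dashv B_\psi$ to the pointwise adjunction $\varphi \dashv \psi$ by unwinding the morphism condition (M) on both sides. Since $(\bbT,\sV)$-graphs and their morphisms do not involve the monad structure of $\bbT$ nor the quantalic operations on $\sV$, everything reduces to manipulating monotone maps between powers of $\sV$ and $\sW$.

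First I would verify functoriality of $B_\varphi$ and $B_\psi$. The key point is the Beck--Chevalley-type equality
$$f_!^{\sW}\cdot\varphi^X \;=\; \varphi^Y\cdot f_!^{\sV},$$
valid for every map $f: X \to Y$, because $\varphi$, being a left adjoint, preserves all suprema; both sides evaluated at $y\in Y$ return $\varphi\bigl(\bv_{x\in f^{-1}y}\sigma(x)\bigr)$. Dually, $\psi$ only preserves meets, so one obtains merely the inequality $g_!^{\sV}\cdot\psi^{X'}\leq\psi^{Y'}\cdot g_!^{\sW}$ (which follows from monotonicity of $\psi$), but this inequality already suffices, together with monotonicity of $\psi^{Y'}$, to propagate (M) through $B_\psi$.

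Next, for the adjunction itself, consider a map $f:X\to Y$ between the underlying sets of $(X,c)\in(\bbT,\sV)\text{-}{\bf Gph}$ and $(Y,d)\in(\bbT,\sW)\text{-}{\bf Gph}$. The condition that $f$ be a morphism $B_\varphi(X,c)\to(Y,d)$ reads $f_!^{\sW}\cdot\varphi^X\cdot c\leq d\cdot Tf$, which by the equality above becomes $\varphi^Y\cdot f_!^{\sV}\cdot c\leq d\cdot Tf$. Evaluating pointwise at $t\in TX$ and $y\in Y$, this says $\varphi\bigl((f_!^{\sV}c(t))(y)\bigr)\leq (d(Tf(t)))(y)$, which by $\varphi\dashv\psi$ is equivalent to $(f_!^{\sV}c(t))(y)\leq\psi\bigl((d(Tf(t)))(y)\bigr)$, i.e.\ $f_!^{\sV}\cdot c\leq\psi^Y\cdot d\cdot Tf$. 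This is precisely the condition that $f$ be a morphism $(X,c)\to B_\psi(Y,d)$. Since the bijection is the identity on underlying maps, naturality in both variables is automatic.

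I do not anticipate a substantive obstacle: the statement is essentially formal and rests entirely on the fact that left adjoints preserve joins, which is exactly what is needed to commute $\varphi^{(-)}$ past the image functor $f_!$ arising in condition (M). No use is made of the monad multiplication or unit, nor of the quantalic tensor, which is why the adjunction lives at the much weaker level of graphs rather than lax algebras.
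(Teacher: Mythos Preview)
Your proposal is correct and follows essentially the same approach as the paper: both verify the hom-set bijection by showing that the morphism condition $f_!^{(\sW)}\cdot\varphi^X\cdot c\leq d\cdot Tf$ is equivalent to $f_!^{(\sV)}\cdot c\leq\psi^Y\cdot d\cdot Tf$. The only cosmetic difference is that you use the equality $f_!^{\sW}\cdot\varphi^X=\varphi^Y\cdot f_!^{\sV}$ (valid since $\varphi$ preserves suprema) and then transpose pointwise via $\varphi\dashv\psi$, whereas the paper uses only the inequality $f_!^{(\sW)}\cdot\varphi^X\leq\varphi^Y\cdot f_!^{(\sV)}$ together with the counit inequality $\varphi^Y\cdot\psi^Y\leq 1_{\sW^Y}$; these are interchangeable ways of exploiting the same adjunction.
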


\begin{proof}
Given a $(\bbT,\sV)$-graph $(X,c)$ and a $(\bbT,\sW)$-graph, we must verify that a map $f:X\to Y$ is a morphism
 $(X,c)\to(Y,\psi^Y\cdot d)$ if, and only if, it is a morphism $(X,\varphi^X\cdot c)\to(Y,d)$, which amounts to showing
 \[f_!^{(\sV)}\cdot c\leq\psi^Y\cdot d\cdot Tf\iff f_!^{(\sW)}\cdot\varphi^X\cdot c\leq d\cdot Tf.\]
 But this is obvious: given the left-hand inequality, compose it from the left with $\varphi^Y$ and use 
 $\varphi^Y\cdot\psi^Y\leq 1_{\sW^Y}$ and 
 $f_!^{(\sW)}\cdot\varphi^X\leq\varphi^Y\cdot f_!^{(\sV)}$ to obtain the right-hand inequality. The converse direction is similar.
\end{proof}

If we are given lax distributive laws $\lambda,\kappa$ of $\bbT$ over $\bbP_{\sV},\bbP_{\sW}$, respectively, what it takes for $B_{\varphi}$ to map $(\lambda,\sV)\text{-}{\bf Alg}$ into $(\kappa,\sW)\text{-}{\bf Alg}$ is well known from the context of $(\bbT,\sV)$-categories (see \cite{MonTop, LaxDistLaws}): $\varphi: (\sV,\otimes,\sk)\to(\sW,\otimes,{\sf l})$ should be a {\em lax homomorphism} of quantales, that is: monotone, with ${\sf l}\leq\sk$ and $\varphi(u)\otimes\varphi(v)\leq\varphi(u\otimes v)$, for all $u, v\in\sV$; in addition, $\varphi$ should satisfy the $\lambda\text{-}\kappa${\em -compatibility condition} $ \varphi^{TX}\cdot\lambda_X\geq\kappa_X\cdot T(\varphi^X)$. 

However, in order to be able to restrict the adjunction of Lemma \ref{graphadjunction} to the categories of lax algebras, while $\psi$ needs to satisfy these conditions, no additional condition (beyond monotonicity) is required for its left adjoint $\varphi$, thanks to the following simple fact:

\begin{lem}\label{reflexivity}
For a lax distributive law $\lambda$ of $\bbT$ over $\bbP_{\sV}$, $(\lambda,\sV)\text{-}{\bf Alg}$ is reflective in $(\bbT,\sV)\text{-}{\bf Gph}$. The reflector assigns to a $(\bbT,\sV)$-graph $(X,c)$ the lax $(\lambda,\sV)$-algebra $(X,\overline{c})$,
with \[\overline{c}=\bw\{c':TX\to\sV^X\;|\;c\leq c', \,(X,c')\in (\lambda,\sV)\text{-}{\bf Alg}\}.\]
\end{lem}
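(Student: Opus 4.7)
I would work throughout in the equivalent $\sV$-relation formulation of Remark~\ref{lax ext}, writing $a(\frx,x)=c(\frx)(x)$, so that (R) becomes $\sk\le a(e_Xx,x)$ and (T) becomes $\hat{T}a\cdot a\le a\cdot m_X$. The argument splits into three parts: that $\overline{c}$ exists, that $(X,\overline{c})$ is a lax algebra, and that the identity map is universal.

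The defining family is non-empty (the top relation trivially satisfies both axioms) and closed under pointwise infima. Axiom (R) passes through infima directly. For (T), any $a'$ in the family satisfies, together with monotonicity of $\hat{T}$ (condition (f)) and $\overline{a}\le a'$, the chain $\hat{T}\overline{a}\cdot\overline{a}\le\hat{T}a'\cdot a'\le a'\cdot m_X$; taking the infimum over $a'$ yields $\hat{T}\overline{a}\cdot\overline{a}\le\overline{a}\cdot m_X$, since composition with the graph of the map $m_X$ on the right preserves infima pointwise.

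For the universal property, given a morphism $f:(X,c)\to(Y,d)$ with $(Y,d)$ a lax algebra, it suffices to exhibit some $c^*$ in the defining family through which $f$ factors as a lax-algebra morphism; then $\overline{c}\le c^*$ will force $f_!\cdot\overline{c}\le d\cdot Tf$. The natural candidate is the ``initial lift'' $c^*(\frx)(x):=d(Tf\,\frx)(fx)$. Here $c\le c^*$ is exactly axiom (M) for $f:(X,c)\to(Y,d)$; axiom (R) for $c^*$ follows from naturality of $e$ together with (R) for $d$; and $f:(X,c^*)\to(Y,d)$ is a morphism directly from the definition of $c^*$.

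The main obstacle is verifying transitivity of $c^*$. Writing $a^*(\frx,x)=b(Tf\,\frx,fx)$ for the corresponding relation, transitivity of $b$ supplies
\[\hat{T}b(TTf\,\frX,Tf\,\fry)\otimes b(Tf\,\fry,fx)\le b(m_Y(TTf\,\frX),fx)=a^*(m_X\frX,x),\]
so everything reduces to the key inequality $\hat{T}a^*(\frX,\fry)\le\hat{T}b(TTf\,\frX,Tf\,\fry)$. To prove this, I would first derive the ``mate'' of condition (a), namely
\[\lambda_W\cdot T(g^*)\le(Tg)^*\cdot\lambda_Z\qquad\text{for every map }g:W\to Z,\]
by transposing (a) along the adjunction $g_!\dashv g^*$ to get $\lambda_W\le(Tg)^*\cdot\lambda_Z\cdot T(g_!)$, then pre-composing with $T(g^*)$ and using $g_!\cdot g^*\le 1_{\sV^Z}$ together with the monotonicity (f) of $\lambda_Z\cdot T(-)$ to eliminate the resulting $T(g_!\cdot g^*)$ term. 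Applied to $g=Tf$ and combined with the identity $\overleftarrow{a^*}=(Tf)^*\cdot\overleftarrow{b}\cdot f$ expressing $a^*$ as the $(Tf,f)$-pullback of $b$, this yields the required inequality once one unwinds $\overleftarrow{\hat{T}(-)}=\lambda\cdot T\overleftarrow{(-)}$.
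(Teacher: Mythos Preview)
Your argument is correct and follows essentially the same route as the paper: close the family of lax-algebra structures above $c$ under pointwise infima, then for the universal property use the initial lift $c^*=f^!\cdot d\cdot Tf$ (your $c^*(\frx)(x)=d(Tf\,\frx)(fx)$) as a member of that family. The paper simply asserts that $f^!\cdot d\cdot Tf$ satisfies (R) and (T) whenever $d$ does; you supply the missing detail for (T) via the mate $\lambda_W\cdot T(g^*)\le (Tg)^*\cdot\lambda_Z$ of the lax naturality law, which is a clean way to handle it. A small notational slip: in your opening line you write (T) as ``$\hat{T}a\cdot a\le a\cdot m_X$'', whereas the composite in $\sV\text{-}{\bf Rel}$ is $a\circ\hat{T}a\le a\circ m_X$; your later pointwise calculation shows you mean the right thing.
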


\begin{proof}
As infima in $\sV^X$ are formed pointwise, and as $\lambda$ is monotone, when all $c'\geq c$ satisfy (R), (T), the same is true for $\overline{c}$, since
\[c'\cdot e_X\geq \sy_X\quad\text{ and  }\quad c'\cdot m_X\geq\sfs_X\cdot c'_!\cdot \lambda_X\geq\sfs_X\cdot(\overline{c})_!\cdot\lambda_X.\]
Furthermore, for any morphism $f:(X,c)\to(Y,d)$ one has $c\leq f^!\cdot d\cdot Tf$,
where $f_!\dashv f^!:\sV^Y\to\sV^X$. Since $c\leq f^!\cdot d\cdot Tf$  satisfies (R), (T) when $d$ does, in that case one has 
$\overline{c}\leq  f^!\cdot d\cdot Tf$, and therefore a morphism $f:(X,\overline{c})\to(Y,d)$.
\end{proof}

\begin{prop}\label{changeofbaseadjunction}
For lax distributive laws $\lambda, \kappa$ of a monad $\bbT$ over $\bbP_{\sV},\bbP_{\sW}$, respectively, and a lax homomorphism $\psi:\sW\to\sV$ 
that preserves infima and satisfies the $\lambda\text{-}\kappa$-compatibility condition, the change-of-base functor
\[B_{\psi}:(\kappa,\sW)\text{-}{\bf Alg}\to(\lambda,\sV)\text{-}{\bf Alg}\]
has a left adjoint $\overline{B}_{\varphi}$, given by $(X,c)\mapsto (X,\overline{\varphi^X\cdot c})$, where $\varphi\dashv \psi$.
\end{prop}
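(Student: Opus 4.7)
The plan is to build $\overline{B}_\varphi$ by composing two pieces of machinery already at hand: the graph-level adjunction $B_\varphi\dashv B_\psi$ of Lemma~\ref{graphadjunction}, and the reflection of $(\bbT,\sW)\text{-}{\bf Gph}$ onto $(\kappa,\sW)\text{-}{\bf Alg}$ provided by Lemma~\ref{reflexivity}. Concretely, I would define
\[
\overline{B}_\varphi(X,c)\;=\;(X,\overline{\varphi^X\cdot c}),
\]
the reflection of the $(\bbT,\sW)$-graph $B_\varphi(X,c)=(X,\varphi^X\cdot c)$ into $(\kappa,\sW)\text{-}{\bf Alg}$.

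Before chasing hom-sets, I must verify that $B_\psi$ genuinely restricts to a functor $(\kappa,\sW)\text{-}{\bf Alg}\to(\lambda,\sV)\text{-}{\bf Alg}$: given $(Y,d)\in(\kappa,\sW)\text{-}{\bf Alg}$, one checks (R) and (T) for $\psi^Y\cdot d$. For (R), applying $\psi^Y$ to $\sy_Y^{(\sW)}\leq d\cdot e_Y$ and comparing with $\sy_Y^{(\sV)}$ pointwise reduces the task to $\sk_\sV\leq\psi({\sf l}_\sW)$, which holds because $\psi$ is a lax homomorphism. For (T), one applies $\psi^Y$ to the transitivity of $d$ to obtain $\psi^Y\cdot\sfs_Y^{(\sW)}\cdot d_!\cdot\kappa_Y\cdot Td\leq\psi^Y\cdot d\cdot m_Y$, and then dominates $\sfs_Y^{(\sV)}\cdot(\psi^Y\cdot d)_!\cdot\lambda_Y\cdot T(\psi^Y\cdot d)$ by its left-hand side. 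This latter bound is the crux, and is where all three hypotheses on $\psi$ get consumed simultaneously: the $\otimes$-laxity of $\psi$ moves $\psi$ across $\sfs^{(\sV)}$ into $\sfs^{(\sW)}$, the infima-preservation of $\psi$ lets $\psi^{\sV^Y}$ pass through the $\sV$-image functor $(-)_!$ into $d_!$, and the $\lambda$-$\kappa$-compatibility (transposed via the adjunction $\varphi\dashv\psi$) replaces $\lambda_Y\cdot T\psi^Y$ by the dominating $\psi^{TY}\cdot\kappa_Y$.

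With the restriction of $B_\psi$ in place, the adjunction $\overline{B}_\varphi\dashv B_\psi$ falls out of a natural chain of hom-set bijections:
\begin{align*}
(\kappa,\sW)\text{-}{\bf Alg}(\overline{B}_\varphi(X,c),(Y,d))
&\cong(\bbT,\sW)\text{-}{\bf Gph}((X,\varphi^X\cdot c),(Y,d))\\
&\cong(\bbT,\sV)\text{-}{\bf Gph}((X,c),(Y,\psi^Y\cdot d))\\
&\cong(\lambda,\sV)\text{-}{\bf Alg}((X,c),B_\psi(Y,d)).
\end{align*}
The first line uses the universal property of the reflection (Lemma~\ref{reflexivity}); the second is Lemma~\ref{graphadjunction}; the third invokes the restriction step together with the fullness of $(\lambda,\sV)\text{-}{\bf Alg}$ inside $(\bbT,\sV)\text{-}{\bf Gph}$. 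Naturality of each bijection in $(X,c)$ and $(Y,d)$ is inherited from the corresponding naturalities of Lemmas~\ref{graphadjunction} and~\ref{reflexivity}. The main obstacle is the transitivity verification in the restriction step, since this is the only moment at which the three conditions on $\psi$ must all be combined; once it is secured, the adjointness itself is purely formal.
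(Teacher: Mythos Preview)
Your proof is correct and follows exactly the same route as the paper: observe that $\psi$, preserving infima, has a left adjoint $\varphi$; then obtain $\overline{B}_{\varphi}$ as the composite of the graph-level left adjoint $B_{\varphi}$ (Lemma~\ref{graphadjunction}) with the reflector of Lemma~\ref{reflexivity}. The paper's own proof is a two-line pointer to these two lemmas, since the fact that $B_{\psi}$ restricts to $(\kappa,\sW)\text{-}{\bf Alg}\to(\lambda,\sV)\text{-}{\bf Alg}$---which you sketch in some detail---is treated just above Lemma~\ref{reflexivity} as already known from \cite{MonTop, LaxDistLaws}. One small remark: your parenthetical ``transposed via the adjunction $\varphi\dashv\psi$'' is unnecessary, since the $\lambda$-$\kappa$-compatibility inequality $\lambda_Y\cdot T(\psi^Y)\leq\psi^{TY}\cdot\kappa_Y$ is assumed directly for $\psi$ in the hypothesis.
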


\begin{proof}
As an infima-preserving map of complete lattices, $\psi$ does indeed have left adjoint $\varphi$. Following Lemma \ref{graphadjunction} and \ref{reflexivity}, the left adjoint of the functor $B_{\psi}$ is just the composite of the two left adjoints established previously.
\end{proof}

We can now apply the Proposition to the ultrafilter law $\beta=\beta^{(\sV)}$ of Proposition \ref{Ufilterdistributes}, first noting that any map $\varphi:\sV\to\sW$ 
satisfies the $\beta^{(\sV)}\text{-}\beta^{(\sW)}$-compatibility condition--strictly so, as a quick inspection reveals.
As usual, we write $\sV\text{-}{\bf Cat}$ for $(\mathbb{I},\sV)\text{-}{\bf Cat}\cong (1,\sV)\text{-}{\bf Alg}$ (where $\mathbb I$ is the identical monad on {\bf Set} and $1:\sP_{\sV}\to\sP_{\sV}$ the idential transformation)). Recall that a quantale $\sV$ is {\em  integral} when its $\otimes$-neutral element $\sk$ is the top element $\top$ in $\sV$.

\begin{thm}\label{equivalences}
For completely distributive and integral quantales $\sV, \sW$, let $\varphi:\sV\to\sW$ be monotone and $\psi:\sW\to\sV$ a lax homomorphism of quantales. Then the following statements are equivalent:
\begin{enumerate}
\item[{\rm (i)}] $\varphi\dashv\psi:\sW\to\sV$;
\item[{\rm (ii)}] $\overline{B}_\varphi\dashv B_\psi:(\beta,\sW)\text{-}\bf{Alg}\to(\beta,\sV)\text{-}\bf{Alg}$;
\item[{\rm (iii)}] $\overline{B}_\varphi\dashv B_\psi:\sW\text{-}\bf{Cat}\to\sV\text{-}\bf{Cat}$.
\end{enumerate}
\end{thm}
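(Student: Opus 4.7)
My plan is to deduce (i)$\Rightarrow$(ii) and (i)$\Rightarrow$(iii) by a single application of Proposition \ref{changeofbaseadjunction}, and to treat the two converses uniformly by testing the alleged high-level adjunction on a family of explicit two-point objects. For the forward direction, note that any $\psi$ admitting a left adjoint preserves infima; the $\beta^{(\sV)}$-$\beta^{(\sW)}$-compatibility condition is automatic (as remarked just before the theorem), and in the identity-monad case relevant for $\sV\text{-}{\bf Cat}$ it is vacuous. So Proposition \ref{changeofbaseadjunction} produces $\overline{B}_\varphi\dashv B_\psi$ simultaneously on $(\beta,\sV)\text{-}{\bf Alg}$ and on $\sV\text{-}{\bf Cat}$.

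For the converses, given $u,v\in\sV$ I would introduce the $\sV$-Cat $X_{u,v}$ on $\{0,1\}$ with diagonal $\top$, $a(0,1)=u$ and $a(1,0)=v$: integrality ($\sk=\top$) forces the diagonal, and transitivity imposes no further constraint on the off-diagonal entries. Since $\sU\{0,1\}$ contains only principal ultrafilters, the assignment $\ell(\dot\imath)(j):=a(i,j)$ simultaneously exhibits $X_{u,v}$ as a $(\beta,\sV)$-algebra whose axiom (T'') in Lemma \ref{beta algebras} collapses pointwise to ordinary $\sV$-Cat transitivity. Using Lemma \ref{reflexivity} together with the analogous diagonal-forcing in $\sW$, one then obtains
\[\overline{B}_\varphi X_{u,v}=X_{\varphi(u),\varphi(v)}\qquad\text{and}\qquad B_\psi X_{w_1,w_2}=X_{\psi(w_1),\psi(w_2)}\]
in both the $\sW\text{-}{\bf Cat}$ and $(\beta,\sW)\text{-}{\bf Alg}$ readings. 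Since $\overline{B}_\varphi$ and $B_\psi$ act as the identity on underlying sets, the adjunction bijection preserves underlying maps; applied to the identity map it yields
\[(\varphi(u)\leq w_1\ \wedge\ \varphi(v)\leq w_2)\iff(u\leq\psi(w_1)\ \wedge\ v\leq\psi(w_2))\]
for all $u,v\in\sV$ and $w_1,w_2\in\sW$. Specialising $u=\top_\sV$, $w_1=\top_\sW$, $v=\bot$, $w_2=\bot_\sW$ forces $\varphi(\bot)=\bot$; afterwards choosing $v=\bot$, $w_2=\bot$ reduces the biconditional to $\varphi(u)\leq w_1\iff u\leq\psi(w_1)$, which is precisely $\varphi\dashv\psi$.

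The main obstacle I anticipate is not conceptual but bookkeeping: verifying that the reflection $\overline{B}_\varphi$, computed inside the ambient $(\bbU,\sW)$-graph category for (ii), really coincides on these test objects with the one computed inside the $(\mathbb{I},\sW)$-graph category for (iii). The saving grace is that for a two-element underlying set every ultrafilter is principal, so the relevant $(\bbU,\sW)$-graph is encoded by exactly four scalar entries, and the transitive closures in the two ambient categories are forced to produce the same object.
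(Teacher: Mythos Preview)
Your proposal is correct and follows essentially the same route as the paper: (i)$\Rightarrow$(ii),(iii) via Proposition \ref{changeofbaseadjunction}, and the converses by probing the alleged adjunction on two-point test objects, using that $\sU\{0,1\}\cong\{0,1\}$ to collapse case (ii) to case (iii). The only cosmetic differences are that the paper works with the symmetric object $X_v$ (your $X_{v,v}$) and reads off $v\leq\psi\varphi(v)$ and $\varphi\psi(w)\leq w$ directly from the unit and counit, whereas you use the hom-set bijection on ${\rm id}_X$ and pass through an unnecessary $\varphi(\bot)=\bot$ step---specialising $u=v$, $w_1=w_2$ would have given the adjunction inequality in one stroke.
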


\begin{proof}
The implications (i)$\Longrightarrow$(ii) and (i)$\Longrightarrow$(iii)) follow from Proposition \ref{changeofbaseadjunction}.

For (iii)$\Longrightarrow$(i) we must show $v\leq\psi\varphi(v)$ and $\varphi\psi(w)\leq w$, 
for all $v\in\sV, w\in\sW$.
Consider $X=\{x,y\}$ with $x\neq y$ and, for any $v\in\sV$, define a $\sV$-category structure $a:X\nrightarrow X$ on $X$ by $a(x,x)=a(y,y)=\sk$ and $a(x,y)=a(y,x)=v$. Since $\sW$ is integral, one easily sees that the least $\sW$-category structure $b$ on $X$ with $\varphi a\leq b$ 
is given by $b(x,x)=b(y,y)=\top$ and $b(x,y)=b(y,x)=\varphi(v)$; hence, $\overline{B}_{\varphi}(X,a)=(X,b)$. Since the adjunction unit 
$(X,a)\to B_{\psi}\overline{B}_{\varphi}(X,a)=(X,\psi b)$ is a $\sV$-functor,
$v=a(x,y)\leq\psi b(x,y)=\psi\varphi(v)$ follows. Similarly one shows $\varphi\psi(w)\leq w$, and (i) follows.

For (ii)$\Longrightarrow$(i), one may proceed as in (iii)$\Longrightarrow$(i), simply because, for finite $X$, one has $\sU X\cong X$. 
\end{proof}
We note that the equivalence of (i) and (ii) appears in \cite{Fang2008}, Theorem 3.1, under the hypothesis that both $\varphi$ and $\psi$ be lax homomorphisms of quantales.

For the sake of completeness we also note that the hypothesis of Theorem \ref{equivalences} that $\psi$ be a lax homomorphism, comes for free when $\varphi$ is a
{\em homomorphism} of quantales, {\em i.e.}, a sup-preserving map which also preserves the monoid structure of the quantales, thanks to the following Proposition.

\begin{prop}\label{homomorphism}
When $\sW$ is integral, the right adjoint of a homomorphism $\varphi:\sV\to\sW$ of quantales is a lax homomorphism of quantales.
\end{prop}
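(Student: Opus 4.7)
The plan is to denote the right adjoint of $\varphi$ by $\psi$ and check directly the three conditions in the definition of a lax homomorphism of quantales: monotonicity, the lax unit inequality $\sk_{\sV}\leq\psi(\sk_{\sW})$, and the lax multiplication inequality $\psi(w_1)\otimes\psi(w_2)\leq\psi(w_1\otimes w_2)$. Monotonicity is free, since any right adjoint between posets is monotone.

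For the lax unit inequality, I would use that $\varphi$, being a quantale homomorphism, preserves the tensor-neutral elements, so $\varphi(\sk_{\sV})=\sk_{\sW}$, and then transpose across the adjunction $\varphi\dashv\psi$ to obtain $\sk_{\sV}\leq\psi(\sk_{\sW})$. The hypothesis that $\sW$ is integral gives an alternative route, namely that $\sk_{\sW}=\top$ in $\sW$, while $\psi$, being a right adjoint, preserves the top element, so $\psi(\sk_{\sW})=\top\geq\sk_{\sV}$; I will mention both arguments since the integrality assumption is what is actually invoked in the statement.

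For the lax multiplication inequality, the strategy is to transpose across the adjunction once more: $\psi(w_1)\otimes\psi(w_2)\leq\psi(w_1\otimes w_2)$ is equivalent to $\varphi(\psi(w_1)\otimes\psi(w_2))\leq w_1\otimes w_2$. Since $\varphi$ preserves the tensor, the left-hand side equals $\varphi\psi(w_1)\otimes\varphi\psi(w_2)$, and then the counit inequalities $\varphi\psi(w_i)\leq w_i$ together with the monotonicity of $\otimes$ (which follows from its sup-preservation in each variable) yield the desired bound.

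There is essentially no serious obstacle here: everything reduces to transposition across $\varphi\dashv\psi$ combined with the fact that $\varphi$ preserves $\sk$ and $\otimes$ on the nose. The only mild subtlety is to be explicit about which condition of the lax homomorphism definition the integrality of $\sW$ is buying us, which is the unit inequality (via $\psi(\top)=\top$); the multiplication inequality uses only that $\varphi$ is a strict homomorphism and that $\otimes$ is monotone.
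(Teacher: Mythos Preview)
Your proof is correct and essentially matches the paper's: the paper proves the unit law exactly via your second route ($\psi(\top)=\top\geq\sk$, using integrality of $\sW$), and proves the multiplication law by the chain $\psi(u\otimes w)\geq\psi(\varphi\psi(u)\otimes\varphi\psi(w))=\psi\varphi(\psi(u)\otimes\psi(w))\geq\psi(u)\otimes\psi(w)$, which is just your transposition argument written out directly. Your first route for the unit (transposing $\varphi(\sk_{\sV})=\sk_{\sW}$) is a valid bonus observation showing that integrality is not actually needed there.
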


\begin{proof}
Let $\psi:\sW\to\sV$ be the right adjoint of the sup-preserving map $\varphi$. Since $\psi$ preserves infima,  $\psi(\top)=\top\geq \sk$. 
Also, for all $u,w\in\sW$,
\[\psi(u\otimes w)\geq \psi(\varphi \psi(u)\otimes \varphi \psi(w))=
\psi\varphi(\psi (u)\otimes \psi(w))\geq \psi(u)\otimes \psi(w).\] 
\end{proof}

We will now apply Proposition \ref{changeofbaseadjunction} and Theorem \ref{equivalences} to the only homomorphism $\iota: {\sf 2}=\{\bot<\top\}\to\sV$, 
given by $\iota(\bot)=\bot,\,\iota(\top)=\sk$. The monotone map $\iota$ has a right adjoint $\pi$,
given by $(\pi(v)=\top\iff v\geq\sk)$ for all $v\in\sV$, which is a lax homomorphism of quantales. If $\sV$ is integral, $\iota$ has also a left adjoint $o$,
given by $(o(v)=\bot\iff v=\bot)$ for all $v\in\sV$. Considering the identical monad ${\mathbb I}$ one obtains the well-known fact (see \cite{MonTop}) that, for $\sV$ non-trivial,  $B_{\iota}$ embeds the category {\bf Ord} of (pre)ordered sets and monotone maps as a full coreflective subcategory into 
$\sV\text{-}{\bf Cat}$, which is also reflective when $\sV$ is integral. Complete distributivity of $\sV$ is not needed for this, but it becomes essential now when we consider the ultrafilter monad $\bbU$ and its lax distributive law $\beta$.

\begin{cor}
For $\sV$ completely distributive, $B_{\iota}$ embeds the category {\bf Top} of topological spaces into $\sV\text{-}{\bf App}$ as a full coreflective subcategory, with coreflector
$B_{\pi}$. If $\sV$ is integral, the embedding is also reflective, with reflector $\overline{B}_{o}$. In particular, 
{\bf Top} is both, reflective and coreflective, in {\bf App}, as well as in {\bf ProbApp}.
\end{cor}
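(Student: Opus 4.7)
The plan is to apply Theorem \ref{equivalences} to the canonical quantale homomorphism $\iota:{\sf 2}\to\sV$. By Theorem \ref{mainthm}, ${\bf Top}\cong{\sf 2}\text{-}{\bf App}\cong(\beta,{\sf 2})\text{-}{\bf Alg}$ and $\sV\text{-}{\bf App}\cong(\beta,\sV)\text{-}{\bf Alg}$, so we may work in the lax-algebra language throughout. Since $\iota$ is a strict homomorphism of quantales (trivially, with $\iota(\bot)=\bot$ and $\iota(\top)=\sk$), Proposition \ref{homomorphism} tells us that its right adjoint $\pi$ is a lax homomorphism; moreover, the $\beta$-compatibility condition holds strictly for any monotone map, as noted just before Theorem \ref{equivalences}.

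First I would verify that $\overline{B}_{\iota}=B_{\iota}$ on topological spaces: since $\iota$ preserves all joins, all meets, and the tensor, applying $\iota^X$ pointwise to a ${\sf 2}$-valued lax $(\beta,{\sf 2})$-algebra structure yields a $\sV$-valued structure that satisfies conditions (R'') and (T'') of Lemma \ref{beta algebras} verbatim, requiring no reflection step. Theorem \ref{equivalences} now gives the adjunction $B_{\iota}\dashv B_{\pi}$ between ${\bf Top}$ and $\sV\text{-}{\bf App}$. Because $\pi\circ\iota=\id_{\sf 2}$, we have $B_{\pi}\circ B_{\iota}=\id_{{\bf Top}}$; the adjunction unit is therefore the identity, making $B_{\iota}$ a fully faithful coreflective embedding with coreflector $B_{\pi}$.

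When $\sV$ is integral, $\iota$ has the additional left adjoint $o:\sV\to{\sf 2}$ with $o(v)=\bot$ iff $v=\bot$. Applying Theorem \ref{equivalences} with $\varphi=o$ and $\psi=\iota$ (still a strict homomorphism, hence lax) gives $\overline{B}_{o}\dashv B_{\iota}$, so ${\bf Top}$ is reflectively embedded in $\sV\text{-}{\bf App}$ with reflector $\overline{B}_{o}$. The overline is essential here because $o$ is only monotone and need not preserve tensor, so one must apply Lemma \ref{reflexivity} after change-of-base. The main subtlety throughout is the identification $\overline{B}_{\iota}=B_{\iota}$, which hinges on $\iota$ being strict enough to transport the lax-algebra axioms intact; once this is confirmed, the final claims for ${\bf App}$ and ${\bf ProbApp}$ are immediate, since both $[0,\infty]$ and ${\bf\Delta}$ are completely distributive and integral (with tensor units $0$ and $\kappa$, respectively, being the top elements).
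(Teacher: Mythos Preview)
Your proof is correct and follows essentially the same route as the paper: apply the change-of-base machinery (Proposition~\ref{changeofbaseadjunction} / Theorem~\ref{equivalences}) to the adjunctions $o\dashv\iota\dashv\pi$, use $\pi\iota=\id_{\sf 2}$ for full faithfulness, and observe that $\overline{B}_{\iota}=B_{\iota}$ because $\iota$ is a strict quantale homomorphism.

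One small technical caveat: for the \emph{coreflective} part the Corollary does not assume $\sV$ integral, but Theorem~\ref{equivalences} is stated under the standing hypothesis that both quantales be integral (and your appeal to Proposition~\ref{homomorphism} likewise needs the codomain integral). To cover the non-integral case cleanly you should invoke Proposition~\ref{changeofbaseadjunction} directly, after checking by hand---as the paper does just before the Corollary---that $\pi$ is a lax homomorphism (this holds for arbitrary $\sV$, since $u,v\geq\sk$ implies $u\otimes v\geq\sk$). Also, your remark that ``$\iota$ preserves all meets'' is slightly off: $\iota$ preserves only \emph{nonempty} meets unless $\sV$ is integral, but that is all you need, since the infima occurring in (T'') are over nonempty index sets.
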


In fact, using the same technique as above we can refine the last statement of the Corollary and show:

\begin{cor}\label{AppinProbApp}
{\bf App} is fully embedded into {\bf ProbApp} as a reflective and coreflective subcategory.
\end{cor}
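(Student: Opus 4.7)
The strategy is to apply the change-of-base machinery of Proposition \ref{changeofbaseadjunction} (in the spirit of the proof sketch for the preceding Corollary) to the quantale homomorphism $\sigma:[0,\infty]\to{\bf \Delta}$ introduced in Section 2. Using Theorem \ref{mainthm} to identify ${\bf App}\cong(\beta,[0,\infty])\text{-}{\bf Alg}$ and ${\bf ProbApp}\cong(\beta,{\bf \Delta})\text{-}{\bf Alg}$, the aim is to exhibit $B_\sigma$ as a fully faithful functor admitting both a left and a right adjoint, in complete analogy to the role played by $\iota:{\sf 2}\to\sV$. Since $\sigma$ is a genuine quantale homomorphism, $B_\sigma=\overline{B}_\sigma$ already takes ${\bf App}$ into ${\bf ProbApp}$ without the need for the reflection of Lemma \ref{reflexivity}.

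The first step is to verify that $\sigma$ has both a left adjoint $\ell$ and a right adjoint $\rho$ in the poset of monotone maps ${\bf \Delta}\to[0,\infty]$. The right adjoint exists automatically because $\sigma$, being a quantale homomorphism, preserves joins. The existence of the left adjoint is the one point requiring genuine computation: it amounts to $\sigma$ preserving arbitrary infima in the quantale orders. An infimum in $[0,\infty]$ (under the reversed order) is the natural supremum $\alpha^*=\sup_i^{\text{nat}}\alpha_i$, and I must check that $\sigma(\alpha^*)$ is the largest element of ${\bf \Delta}$ pointwise below every $\sigma(\alpha_i)$. The key is that any $\varphi\in{\bf \Delta}$ with $\varphi\leq\sigma(\alpha_i)$ for all $i$ satisfies $\varphi(\gamma)=0$ for every $\gamma<\alpha^*$, and then left-continuity of $\varphi$ forces $\varphi(\alpha^*)=0$ as well, after which $\sigma(\alpha^*)$ is manifestly the largest such function. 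This is the main (indeed only) non-formal obstacle. The resulting left adjoint is $\ell(\varphi)=\sup^{\text{nat}}\{\alpha\,|\,\varphi(\alpha)=0\}$.

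With both adjoints in hand, Proposition \ref{homomorphism} yields that $\rho$ is a lax homomorphism of quantales (since ${\bf \Delta}$ is integral), while $\sigma$ is itself a lax homomorphism; the $\beta$-compatibility condition holds automatically for any monotone map, as noted before Theorem \ref{equivalences}. Proposition \ref{changeofbaseadjunction} therefore delivers the chain of adjunctions $\overline{B}_\ell\dashv B_\sigma\dashv B_\rho$ between ${\bf App}$ and ${\bf ProbApp}$. Full faithfulness of $B_\sigma$ follows from the identity $\rho\circ\sigma=\id_{[0,\infty]}$ (immediate from $\sigma(\alpha)\leq\sigma(\beta)\iff\alpha\leq\beta$ in the quantale orders), which yields $B_\rho\circ B_\sigma=\id_{{\bf App}}$ and exhibits $B_\sigma$ as a section. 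Together, the two adjunctions and the embedding property realize ${\bf App}$ as a simultaneously reflective (via $\overline{B}_\ell$) and coreflective (via $B_\rho$) full subcategory of ${\bf ProbApp}$, completing the proof.
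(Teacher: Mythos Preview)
Your argument is correct and follows essentially the same route as the paper: both exploit the chain of adjunctions $\ell\dashv\sigma\dashv\rho$ (the paper writes $\lambda$ for your $\ell$) and invoke Proposition~\ref{changeofbaseadjunction} twice to obtain $\overline{B}_{\ell}\dashv B_{\sigma}\dashv B_{\rho}$ between $(\beta,[0,\infty])\text{-}{\bf Alg}\cong{\bf App}$ and $(\beta,{\bf\Delta})\text{-}{\bf Alg}\cong{\bf ProbApp}$. You are in fact more careful than the paper, which simply states the existence and explicit form of the two adjoints of $\sigma$ without verifying that $\sigma$ preserves infima or that $B_\sigma$ is fully faithful; your left-continuity argument for the former and the observation $\rho\sigma=\id_{[0,\infty]}$ for the latter fill these gaps.
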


\begin{proof}
The homomorphism $\sigma: [0,\infty]\to{\bf {\Delta}}$ defined in Section 2 (after Corollary \ref{approach}), has
a right adjoint 
\[\rho:{\bf {\Delta}}\to [0,\infty],\;\varphi\mapsto \inf\{\alpha\in[0,\infty]\;|\;\varphi(\alpha)=1\},\]
which is a lax homomorphism of quantales, as well as a left adjoint
 \[\lambda:{\bf {\Delta}}\to [0,\infty], \;\varphi\mapsto\sup\{\alpha\in[0,\infty]\;|\;\varphi(\alpha)=0\}.\]
 One therefore has the adjunctions
 \[\overline{B}_{\lambda}\dashv B_{\sigma}\dashv B_{\rho}:
 (\beta,{\bf{\Delta}})\text{-}{\bf Alg}\to(\beta,[0,\infty])\text{-}{\bf Alg}.\]
\end{proof}

\begin{rem}
The proof of Corollary \ref{AppinProbApp} remains valid if we equip the set of distance distribution functions with the
monoidal structure
\[(\varphi\otimes\psi)(\gamma)={\rm sup}_{\alpha+\beta\leq\gamma}\varphi(\alpha)\&\psi(\beta),\]
where $\&$ is any left-continuous continuous t-norms on $[0,1]$, other than the ordinary multiplication which we used
to define the convolution product $\odot$ of ${\bf{\Delta}}$. In the case $\&=\wedge$, the corresponding proof was first carried out by J\"ager \cite{JagerApp}.
\end{rem}

Here is a third application of Theorem \ref{equivalences}:
\begin{exmp} 
Let $\mathsf{Dn}\sV$ be the set of all down-closed subsets of $\sV$ which, when ordered by inclusion, is a completely distributive lattice. It becomes a quantale with 
\[A\odot B=\{c\in\sV\;|\;\exists a\in A, b\in B: c\leq a\otimes b\}\quad(A,B\in\mathsf{Dn}\sV)\]
and $\odot$-neutral element the down-closure $\da \sk$ of the $\otimes$-neutral  element $\sk$ of $\sV$. 
$\mathsf{Dn}\sV$ is integral if, and only if, $\sV$ is integral. More importantly, if $\sV$ is completely distributive, we have adjunctions
\[\Da\ \dashv\sup\dashv\ \da:\sV\to\mathsf{Dn}\sV\]
(see \cite{Wood}). Furthermore, $\sup:\mathsf{Dn}\sV\to\sV$ is a homomorphism of quantales, while $\da$ is a lax homomorphism (but never a homomorphism, as it fails to preserve the bottom element). Therefore, we obtain the adjunctions
\[\overline{B}_{\Da}\dashv B_{\sup}\dashv B_{\da}:
 \sV\text{-}{\bf Cat}\to \mathsf{Dn}\sV\text{-}{\bf Cat}
 \quad\text{   and   }\quad\overline{B}_{\Da}\dashv B_{\sup}\dashv B_{\da}:
 (\beta,\sV)\text{-}\bf{Alg}\to(\beta,\mathsf{Dn}\sV)\text{-}\bf{Alg}.\]
 For $\sV$ an $n$-element chain, $\mathsf{Dn}\sV$ is an $(n+1)$-element chain, which contains two distinct copies of $\sV$, one reflectively embedded, the other coreflectively.  If $n>2$, $\sV$-categories are generalized (pre)ordered sets $X$, for which the truth value for two points in $X$ being related allows for a discrete linear range, beyond $\top$ or $\bot$. For $\sV=([0,\infty],\geq)$, in addition to the order embedding
 \[\da:[0,\infty]\to\mathsf{Dn}[0,\infty],\;\alpha\mapsto [\alpha,\infty],\]
 which preserves the monoidal structure, but is not a sup-map, one has the order embedding
 \[\curlyvee:[0,\infty]\to\mathsf{Dn}\sV,\;\alpha\mapsto(\alpha,\infty],\]
 which is a sup-map, but does not preserve the monoidal structure. Since $\mathsf{Dn}[0,\infty]$ is a disjoint union of
 the images of the two order embeddings, a $\mathsf{Dn}[0,\infty]$-category structure on a set $X$ will return to a pair of points in $X$ one of two types of distances, with one type always ranking below the other, despite having equal numerical value (since
 $(\alpha,\infty]\subset[\alpha,\infty]$, for all $\alpha\in[0,\infty]).$
  \end{exmp}

%% If you have bibdatabase file and want bibtex to generate the
%% bibitems, please use
%%
%\bibliographystyle{elsarticle-num}
%\bibliographystyle{abbrv}
%\bibliography{hllai}

\end{document}